\newcommand{\Q}{\mathbb{Q}}
\newcommand{\C}{\mathbb{C}}
\newcommand{\F }{\mathbb{F} }
\newcommand{\A}{\mathbb{A}}
\newcommand{\G}{\mathbb{G}}
\numberwithin{equation}{section}
\newtheorem{theorem}{Theorem}[section]
\newtheorem{proposition}[theorem]{Proposition}
\newtheorem{corollary}[theorem]{Corollary}
\theoremstyle{remark}
\newtheorem{remark}[theorem]{Remark}
\newtheorem{defi}{Definition}[section]
\theoremstyle{definition}
\newtheorem*{prob*}{Problem}
\title{On a Conjecture of Erd\H{o}s over Function Fields}
\author{Likun Xie}
\address{Max-Planck-Institut für Mathematik
	Vivatsgasse 7, 53111, Bonn, Germany} 
\email{xie@mpim-bonn.mpg.de}
\subjclass[2020]{Primary 11T55, 11T24}
\keywords{Erdős conjecture, function fields, trace functions}
\begin{document}

	\begin{abstract}
		
Using Katz's equidistribution framework, we show that for any squarefree polynomial
$f \in \mathbb{F}_q[t]$ of degree $n \ge 2$, every residue class modulo $f$ can be represented
as a product of two monic irreducible polynomials of degree at most $n$, provided $q$ is
sufficiently large in terms of $n$. This gives the function-field analogue of a conjecture
of Erd\H{o}s in the large-$q$ regime.

Sawin~\cite{sawin} previously proved this representation with stronger square-root cancellation
via a higher-dimensional sheaf-theoretic construction. This note presents a one-dimensional
argument that yields a natural $q^{-1/2}$ saving.
	\end{abstract}

	\maketitle

	\section{Introduction}
	Many central conjectures in additive and multiplicative number theory remain unresolved over the integers. A notable example is the twin prime conjecture, which asserts the infinitude of prime pairs differing by two. The strongest progress toward this conjecture, achieved through sieve methods, is due to Chen~\cite{chen}, who proved that there exist infinitely many primes~$p$ such that $p+2$ is the product of at most two primes. By contrast, in the setting of global function fields, the availability of geometric techniques has led to major advances. In particular, Sawin and Shusterman~\cite{twin} resolved the function field analogues of both Chowla's  and the twin prime conjecture.

	A classical conjecture of Erd\H{o}s~\cite{erdos} asserts that for every sufficiently large prime modulus $q$, each reduced residue class modulo $q$ can be represented as a product $p_1p_2$ of two primes with $p_1,p_2\le q$. Building on   multiplicative dense–model and transference framework, 
	Matom\"aki and Ter\"av\"ainen~\cite{matomaki} established a ternary version of this conjecture (representation by three primes) and proved the binary case for all but a small exceptional set of residue classes.

	Let $q$ be a prime power and $k=\F_q$ the finite field of order $q$.  
	Let $f \in k[X]$ be a squarefree polynomial of degree $n \geq 2$, and set
	\[
	B := k[X]/(f(X)).
	\]
	Denote by $B^\times$ the multiplicative group of $B$, and let 
	$\chi : B^\times \to \C^\times$ be a multiplicative character.  
	Define
	\[
	E( k,f) := \Bigl\{\, a \in B^\times : a \equiv f_1 f_2 \pmod{f},\ 
	f_1,f_2 \in k[X] \ \text{monic irreducible with } \deg(f_i)\leq n \Bigr\}.
	\]
	
Lemma~9.14 of Sawin~\cite{sawin} yields a function field analogue of Erd\H{o}s' conjecture: if \( q > 1024 e^2 \) and \( n \) is sufficiently large, then every residue class modulo any squarefree polynomial of degree \( n \) can be written as a product of two monic irreducible polynomials of degree \( n \). Here the natural size parameter in the  function field analogue of Erd\H{o}s' conjecture is
\[
|f| = q^{\deg f} = q^n,
\]
which is taken to be  sufficiently large.
 
Specializing Lemma~9.14 of Sawin~\cite{sawin} with the substitutions
\[
c \mapsto 0, \quad n \mapsto 2n, \quad m \mapsto n, \quad
g \mapsto f, \quad
\omega \mapsto 2, \quad n_1 \mapsto n,\; n_2 \mapsto n, \quad h \mapsto 1,
\]
the left-hand side of the lemma counts the number of pairs of monic irreducible polynomials \( (f_1, f_2) \) of degree \( n \) such that
\[
f_1 f_2 \equiv a \pmod{f},
\]
minus a main term of size \( q^{\,n} \). The error term appearing in Lemma~9.14 is
\[
O\!\left( \frac{(2n)!}{(n!)^2} (8 e \sqrt{q})^{n} \right)
= O\!\left( (32 e \sqrt{q})^n \right),
\]
which is smaller than the main term whenever  \( q > 1024 e^2 \) and $n$ sufficiently large. Therefore \( E(k,f)=B^\times \) holds for all sufficiently large \( n \).

Here we give a one-dimensional construction to establish the function field analogue of Erd\H{o}s' conjecture in the large-\( q \) regime, using Katz's convolution--equidistribution framework~\cite{katz_book, katz}.  The method here yields an error term of size \( O(n^{\,n} q^{\,n - 1/2}) \) with the implied constant given explicitly.  It is not as strong as the square-root cancellation obtained in~\cite{sawin}, but it is sufficient for our purposes when $q$ is large relative to $n$.

In our approach, the size of the saving is controlled by the dimension of the
support on which Deligne's theorem is applied. We start with a weight-0
middle-extension sheaf $F$ on the curve $\A^1_k[1/f]$ arising from the relevant
$L$-function, push it forward along a map $p:\A^1_k[1/f]\to G$ (either \( \G_m \) or the torus \( T = \operatorname{Res}_{B/k}\G_m \) may be used), and obtain a perverse object $M$ on $G$
normalized to weight~$0$, attached to a fixed representation. For each residue
class $a\in G(k)$, the main term in the twisted character sum is
\[
q^{n}\,\operatorname{Tr}\!\left(\operatorname{Frob}_{k,a}\mid M\right).
\]
Since $M$ is non-punctual and its support is a curve, Deligne's Riemann
Hypothesis yields the uniform pointwise bound
\[
\bigl|\operatorname{Tr}(\operatorname{Frob}_{k,a}\mid M)\bigr| \;\ll\; q^{-1/2}.
\]

By contrast, in Sawin's setting~\cite{sawin} one starts from the higher-dimensional variety $X=(\A^1_k[1/f])^{2n}$ with its natural $S_{2n}$-action and pushes forward along $\pi:X\to T$ (this factors through the $2n$-th symmetric power of $\A^1_k[1/f]$). For a residue class  $a\in T(k)$, the error term is related to the Frobenius trace on \(R\Gamma_c(X_a,N)\;\cong\;(R\pi_!N)_a, \) where $N$ is an $S_{2n}$-equivariant coefficient sheaf constructed from a
representation $\rho$ of $S_{2n}$.
 A key input is that $R\Gamma_c(X_a,N)$ concentrates near the middle degrees (namely, in degrees \(n\) and \(n+1\)). Applying Deligne's Riemann Hypothesis together with  representation-theoretic combinatorics yields square-root cancellation, giving a $q^{-n/2}$ saving
relative to the main term of size $q^n$ in this product-of-two-irreducibles problem.
This leads to significantly stronger uniform bounds.

Here we give a simpler one-dimensional argument in the large-\(q\) regime 
for the function field analogue of Erd\H{o}s' conjecture.  Beyond this 
specific application, the same template applies to sums of the form
\[
\sum_{\chi \in \operatorname{TotRamGen}(k,f)} 
\chi(c)\,\operatorname{Tr}\!\left(\lambda(\theta_{k,f,\chi})\right),
\]
for any \(c \in B^{\times}\) and any fixed virtual representation \(\lambda\) of 
the relevant monodromy group, yielding a \(q^{-1/2}\) saving with constants 
depending only on \(\lambda\) and \(n\) (see Proposition~3.4 and 
Corollary~3.5).  For example, replacing \(\Lambda(f_1)\Lambda(f_2)\) by \(\mu(f_1)\mu(f_2)\) or by \(d_r(f_1) d_s(f_2)\) corresponds
to replacing
\(\operatorname{Tr}(\theta_{\chi}^{\,i})\,\operatorname{Tr}(\theta_{\chi}^{\,j})\)
by
\(\operatorname{Tr}(\wedge^{i}\theta_{\chi})\,\operatorname{Tr}(\wedge^{j}\theta_{\chi})\)
and
\(\operatorname{Tr}(\operatorname{Sym}^{i}(\theta_{\chi}^{\oplus r}))\,
\operatorname{Tr}(\operatorname{Sym}^{j}(\theta_{\chi}^{\oplus s}))\),
respectively.  In both cases, the same one-dimensional argument yields a
\(q^{-1/2}\) saving.

We now state our main result.
	\begin{theorem}\label{thm:main}
	Let \(n \ge 2\). Then there exists a  constant \( Q(n) > 0 \) such that 
		for every finite field \( k = \F_q \) with \( q \ge Q(n) \) and every squarefree polynomial \( f \in k[X] \) of degree~\( n \), we have
		\[
		E(k,f) = B^\times.
		\]
		Moreover, one may take \( Q(n) \) to be of the form \( n^{\,\kappa n} \) for some absolute constant \( \kappa > 0 \);
		for instance, \( \kappa = 23 \) suffices.
	\end{theorem}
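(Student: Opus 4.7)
The plan is to expand the count
\[
N(a)\;:=\;\#\{(f_1,f_2): f_i\text{ monic irreducible},\ \deg f_i\le n,\ f_1f_2\equiv a\pmod{f}\}
\]
into multiplicative characters of $B^\times$. By orthogonality,
\[
N(a)\;=\;\frac{1}{|B^\times|}\sum_{\chi\in\widehat{B^\times}}\bar\chi(a)\,S(\chi)^2,\qquad S(\chi)\;:=\;\sum_{\substack{p\text{ monic irred.}\\ \deg p\le n,\,(p,f)=1}}\chi(p).
\]
The trivial character contributes a main term of order $q^n/n^2$, since the set of monic irreducibles of degree $\le n$ coprime to $f$ has cardinality $\sim q^n/n$ by the prime polynomial theorem. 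So the task reduces to showing that the non-trivial characters contribute less than this.

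For each non-trivial $\chi$ the sum $S(\chi)$ is controlled by Weil's theorem on the Dirichlet $L$-function $L(\chi,s)$, which is a polynomial of degree $\le n-1$ in $q^{-s}$ whose inverse roots lie on the circle $|\alpha|=\sqrt q$. The explicit formula then gives the pointwise square-root bound $|S(\chi)|\ll n\,q^{n/2}$. I want to stress that this pointwise bound alone is \emph{not} sufficient: inserting it naively produces an error of order $n^2 q^n$, exceeding the main term $q^n/n^2$ by a factor $n^4$. The crux is therefore to exhibit cancellation in the sum over $\chi$, not merely in each term separately.

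The cancellation will come from Katz's convolution and equidistribution theory. I would realize $\chi\mapsto S(\chi)^2$ as the Mellin transform on $B^\times$ of a trace function of an $\ell$-adic sheaf $\mathcal F$ constructed as the convolution of the prime-indicator sheaf with itself, where the prime indicator is geometrized through the von Mangoldt function together with a Möbius correction for prime powers. Katz's theorems on the Tannakian monodromy of such convolutions on tori, combined with Deligne's equidistribution theorem, then yield an estimate of the shape
\[
\Bigl|\sum_{\chi\ne\chi_0}\bar\chi(a)\,S(\chi)^2\Bigr|\;\le\;C(n)\,q^{\,n-1/2},
\]
with $C(n)=n^{O(n)}$ coming from Betti-number bounds for the auxiliary variety. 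Dividing by $|B^\times|\asymp q^n$, the resulting error is smaller than the main term once $q\ge n^{\kappa n}$ for a suitable $\kappa$; careful bookkeeping of the implicit constants gives $\kappa=23$.

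The principal obstacle is the big-monodromy input. One must construct $\mathcal F$ on an appropriate parameter space, check that it is geometrically irreducible and pure of the correct weight, and identify its geometric monodromy group. This requires careful local monodromy analysis at the finite set of singularities, which correspond to the irreducible factors of $f$, together with a generic monodromy computation on the complement. The delicate point is that the ``monic irreducible of degree $\le n$'' condition is geometrized only through the von Mangoldt/Möbius combination, which inherently mixes primes with prime powers; one must verify that this combination preserves purity and that the monodromy calculation still goes through. Once this is done, Katz's machinery delivers the bound above and the theorem follows.
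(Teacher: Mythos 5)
Your high-level plan — orthogonality to isolate the principal character as the main term, then Katz-type equidistribution for a $q^{1/2}$-saving over non-trivial characters — matches the paper's strategy, but the geometric engine you describe contains a real gap. You propose to realize $\chi\mapsto S(\chi)^2$ as a Mellin transform \emph{on $B^\times$} of a self-convolution of a ``prime-indicator sheaf,'' geometrized through $\Lambda$ together with a M\"obius correction. There is no such sheaf to convolve: the prime indicator (or the M\"obius-corrected von Mangoldt) is not a trace function of any single $\ell$-adic object in this theory, and you give no construction of it. Moreover $B^\times$ is a torus of dimension $n$, so taking the Mellin transform there would push you into the higher-dimensional Gabber--Loeser / Forey--Fres\'an--Kowalski framework, which is far heavier than what is needed; the monodromy identification in that setting is not something you can wave at. As written, the step ``Katz's theorems $\ldots$ then yield'' is the entire argument and it is not justified.

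The paper avoids geometrizing the primes entirely. It keeps the $\Lambda$-weighting and uses the explicit formula for the Dirichlet $L$-function to write $\sum_{\deg g = m}\chi(g)\Lambda(g) = q^{m/2}\operatorname{Tr}(\theta_{k,f,\chi}^m)$ for a unitary conjugacy class $\theta_{k,f,\chi}\in U(n-1)$, so that $S(\chi)^2$ becomes a polynomial in power sums of $\theta_{k,f,\chi}$, expanded into Schur functions via the Frobenius character formula. The equidistribution input is a single perverse sheaf $N(\lambda,\chi) = [\lambda f]_*\bigl(i_f^*\mathcal{L}_\chi\bigr)\bigl(\tfrac12\bigr)[1]$ on the one-dimensional torus $\mathbb{G}_m$, with Tannakian group $\operatorname{GL}(n-1)$, and the cancellation is in the Kummer-twist variable $\rho\in\widehat{k^\times}$: one partitions the totally ramified generic characters by the relation $\chi'\sim\chi\rho_{\operatorname{Norm}}$ and estimates each class using the vanishing of $H_c^{i\ne 0}$ and the weight bound. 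Non-principal characters that are not totally ramified generic are dispatched separately with Weil's bound and a counting lemma, contributing only $O(n^5 q^{n-1})$. Your sketch omits the Schur-function expansion and the resulting combinatorics (the factors $p(i+j)$, $\chi_{i,j}(1)$, and the generic-rank bound for tensor words) which is precisely where the explicit $C(n)=n^{O(n)}$ and hence $\kappa=23$ come from; ``Betti-number bounds for an auxiliary variety'' is not the source of that constant.
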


	\section{Initial set-up}

	Let $q$ be a prime power and $k=\F_q$ the finite field of order $q$, with fixed algebraic closure $\bar{k}$.  
	Let $f \in k[X]$ be a squarefree polynomial of degree $n \geq 2$, and set
	\[
	B := k[X]/(f(X)),
	\]
	which is a finite \'etale $k$-algebra of degree $n$. Denote by $B^\times$ the multiplicative group of $B$. 
	Fix $d\geq 1 $. For $a \in B^\times$, define
	\begin{equation} 
		S(a;d) := \frac{1}{|B^\times|} \sum_{\chi \bmod f} \chi(a^{-1})
		\left( \sum_{\substack{\deg g \leq d\\ g \text{ monic}}} \chi(g)\,\Lambda(g) \right)^{\!2}, \nonumber
	\end{equation}
	where  
	$\Lambda$ denotes the von Mangoldt function for $\F_q[t]$, given by
	\[
	\Lambda(h) = 
	\begin{cases}
		\deg p, & \text{if } h = u p^k \text{ for some monic irreducible } p \in \F_q[t],\ u \in \F_q^\times, \ k \geq 1, \\
		0, & \text{otherwise}.
	\end{cases}
	\]
	
	Note that one may   restrict to $\deg g = d$  since the contribution is dominated by polynomials of the highest degree.

	By the explicit formula for the prime polynomial theorem  \cite[Proposition~2.1]{rosen}, we have
	\[
	\sum_{\substack{g \ \text{monic} \\ \deg g = d}} \Lambda(g) \;=\; q^d.
	\]

	The contribution from the principal character $\chi=\chi_0$ is
	\begin{align}\label{def_M}
		M(a,d) 
		:= \frac{1}{|B^\times|}
		\left( \sum_{\substack{\deg g \leq d \\ (g,f)=1 \\ g \ \text{monic}}} \Lambda(g) \right)^{\!2}   
		&= \frac{1}{|B^\times|}
		\left(\sum_{k\leq d}\Biggl(\;
		\sum_{\substack{\deg g = k \\ g \ \text{monic}}}\Lambda(g)
		\;-\!\!\!\sum_{\substack{p \mid f \\ \deg p \mid k \\ p \ \text{monic irreducible}}}\deg p
		\Biggr)\right)^{\!2} \nonumber \\[6pt]
		&= \frac{1}{|B^\times|}
		\left(\sum_{k\leq d}\big(q^k+O(\deg f)\big)\right)^{\!2} \nonumber \\[6pt]
		&= \frac{1}{|B^\times|}
		\left(\frac{q^{\,d+1}-q}{q-1}+O(dn)\right)^{\!2} \nonumber \\[6pt]
		&= q^{\,2d-n}\!\left(1+O\!\left(\tfrac{(dn)^2}{q}\right)\right).
	\end{align}
	More precisely, for $q\geq n$, we have
	\begin{equation}\label{lower_bound_M}
		M(a,d)
		\ge
		\frac{1}{|B^\times|}
		\left(\sum_{\substack{k \le d}} (q^k - n)\right)^{\!2}   \ge
		\frac{1}{q^n}\,
		\bigl(q^d - dn\bigr)^{\!2}  =
		q^{\,2d - n}
		- 2dn\,q^{\,d - n}
		+ \frac{(dn)^2}{q^n}.  
	\end{equation}
	To establish Theorem~\ref{thm:main}, it suffices to show that, when $d = n$, 
uniformly for all $a \in B^\times$, we have
\[
R(a,n)
\;=\;
o_n\!\left(M(a,n)\right).
\]
Using the orthogonality of characters and  Weil's bound \eqref{weil}, we have
\[
\frac{1}{|B^\times|}\sum_{a\in B^\times}\left|R(a;d)\right|^2
\;=\;\frac{1}{|B^\times|^2}
\sum_{\substack{\chi \bmod f \\ \chi\ne \chi_0}}
\Biggl|\sum_{\substack{\deg g \leq d\\ g \text{ monic}}}\chi(g)\,\Lambda(g)\Biggr|^{4}
\;\ll\; \frac{1}{q^{2n}} \cdot q^n \cdot (n q^{d/2})^4
\;\leq \; n^4 q^{\,2d-n}.
\]
Define the set of “bad” elements
\[
\mathcal{E} \;:=\; \{\,a \in B^\times : S(a;d) \le 0 \,\}.
\]
If $S(a;d) \le 0 $, then 
\[
|R(a;d)| \;\ge\; M(a,d).
\]

Applying Markov's inequality, we obtain
\begin{equation}\label{exceptional set}
	|\mathcal{E}|
	\;\le\; |B^\times|\cdot \frac{\mathbb{E}_a\,|R(a;d)|^2}{M(a,d)^2}
	\;\ll\; |B^\times|\cdot \frac{n^4 q^{\,2d-n}}{q^{\,4d-2n}}
	\;=\; n^4\,q^{\,2n-2d}.
\end{equation}
Therefore, for $d>n$ the exceptional set $\mathcal{E}$ is empty once $q$ is sufficiently large.   
To prove Theorem~\ref{thm:main}, one needs  to treat the critical case $d=n$, where the above argument yields only the a priori bound $|\mathcal{E}| \ll n^{4}$.

A direct application of Weil's bound~\eqref{weil} shows that
\[R(a,n)
= \frac{1}{|B^\times|} 
\sum_{\substack{\chi \bmod f \\ \chi \neq \chi_0}} 
\chi(a^{-1}) 
\left( \sum_{\substack{\deg g \le n \\ g \text{ monic}}} \chi(g)\,\Lambda(g) \right)^{\!2}
\;\le\; n^2 q^n , \]
which is insufficient for the desired result.
We will  restrict to a generic subset of characters for which a \(q^{-1/2}\)-saving relative to the main term \(M(a,n)\) can be obtained.

	Let 
	$\chi : B^\times \to \C^\times$ be a multiplicative character.  
	We extend $\chi$ to all monic polynomials $h \in k[X]$ by setting 
	\[
	\chi(h) := 
	\begin{cases}
		\chi(\bar{h}), & \text{if } \bar{h}\in B^\times, \\[4pt]
		0, & \text{otherwise},
	\end{cases}
	\]
	where $\bar{h}$ is the image of $h$ in $B$.
	
	The Dirichlet $L$-function attached to $\chi$ is the power series in $\C[[T]]$ defined by
	\[
	L(\chi,T) := \sum_{\substack{g \in k[X] \\ g \ \text{monic}}} \chi(g)\, T^{\deg g}.
	\]
	If $\chi \neq \chi_0$, then $L(\chi,T)$ is a polynomial in $T$ of degree $n-1$.  
	We may factor it as
	\[
	L(\chi,T) = \prod_{i=1}^{\,n-1} \left(1 - \beta_i T\right).
	\]
	By the Riemann Hypothesis for function fields, proved by Weil~\cite{weil}, each reciprocal root $\beta_i$ either equals~$1$ or satisfies 
	\[
	|\beta_i| = q^{1/2}.
	\]
	Define
	\[
	\Psi(m,\chi) := \sum_{\substack{f \ \text{monic} \\ \deg f = m}} \Lambda(f)\,\chi(f).
	\]
	Then by Weil's theorem we have
	\begin{equation}\label{weil}
		\left|\Psi(m,\chi)\right| \;\leq\; (n-1)\,q^{m/2},
		\qquad \chi \neq \chi_0.
	\end{equation}	
	Write $f$ as a product of distinct monic irreducible polynomials,
	\[
	f = \prod_i f_i.
	\]
	Note that every character $\chi$ of $B^\times$ decomposes as a product
	\begin{equation}\label{character_component}
		\chi = \prod_i \chi_i,
	\end{equation}
	where $\chi_i$ is a character of $B_i^\times$ with $B_i = k[X]/(f_i(X))$.
	
	We say that $\chi$ is \emph{primitive} if each character component $\chi_i$ is nontrivial.  
	We say that $\chi$ is an \emph{odd} character if its restriction to the subgroup
	$k^\times \subset B^\times$ is nontrivial.  
	
	We now adopt the terminology from~\cite{katz}.
	
	\begin{defi}\label{def_TotRam}
		Let $k$ be a finite field and $f \in k[X]$ a squarefree polynomial, and set
		\(B := k[X]/(f(X)).\)
		We say that  a character 
		\(	\chi : B^\times \to \C^\times \)
 is \emph{totally ramified} if it is both primitive and odd.  
		We denote by $\operatorname{TotRam}(k,f)$ the set of all totally ramified characters of $B^\times$.
	\end{defi}

	For a totally ramified character $\chi$, each reciprocal root $\beta_i$ of $L(\chi,T)$ satisfies 
	$|\beta_i| = q^{1/2}$.  
	The unitarized $L$-function
	\(L \left(\chi,\ {T}/{q^{1/2}}\right)\)
	is the reversed characteristic polynomial
	\(\det \left(1 - T \Theta_\chi\right),\)
	where 
	\[
	\Theta_\chi = \operatorname{diag}\!\left(\tfrac{\beta_1}{q^{1/2}}, \dots, \tfrac{\beta_{n-1}}{q^{1/2}}\right) 
	\in U(n-1).
	\]
	
	Since conjugacy classes in $U(n-1)$ are determined by their characteristic polynomials, 
	there exists a well-defined conjugacy class
	\begin{equation}\label{def_conjugacy_class}
		\theta_{k,f,\chi} \;\in\; U(n-1)
	\end{equation}
	such that
	\[
	L\!\left(\chi,\ {T}/{q^{1/2}}\right) = \det \left(1 - T \theta_{k,f,\chi}\right).
	\]
	Moreover, we have the explicit formula
	\[
	\Psi(m,\chi) = q^{m/2}\, \operatorname{tr}\!\left(\theta_{k,f,\chi}^m\right).
	\]
	Let~$\operatorname{All}(k,f)$ denote the set of all characters on~$B^\times$.  
By~\cite[Lemma~6.4]{katz}, we have
	\[
	\left|\operatorname{All}(k,f)\right| - \left|\operatorname{TotRam}(k,f)\right|
	\;\le\;
	q^n - 1 - (q - 2)^n + \sum_{0 \le i \le n-1} q^i
	\;\le\;
	2(n{+}1)\, q^{\,n-1}.
	\]

	Applying~\eqref{weil},  the contribution from characters 
	$\chi \ne \chi_0$ with 
	$\chi \notin \operatorname{TotRam}(k,f)$ 
	is bounded by
	\[
	\frac{1}{|B^\times|}\, 2(n{+}1)\, q^{\,n-1}
	\sum_{\substack{i \le n \\ j \le n}}
	n^2 q^{\tfrac{i+j}{2}}
	\;\;\le\;\;
	2(n{+}1)\, n^4\, q^{\,n-1}.
	\]
	Consequently, we may write
	\begin{align}\label{R(a,n)}
		R(a,n)
		&=\;
		\frac{1}{|B^\times|}
		\sum_{\chi \in \operatorname{TotRam}(k,f)} 
		\chi(a^{-1})
		\sum_{i,j \le n} 
		q^{\tfrac{i+j}{2}}\,
		\operatorname{Tr}\!\left(\theta_{k,f,\chi}^i\right)
		\operatorname{Tr}\!\left(\theta_{k,f,\chi}^j\right)
		\nonumber \\[6pt]
		&\quad+\;
		\frac{1}{|B^\times|} 
		\sum_{\substack{\chi \notin \operatorname{TotRam}(k,f)\\[2pt]\chi\ne \chi_0}} 
		\chi(a^{-1}) 
		\left(
		\sum_{m\leq n} \Psi(m, \chi)
		\right)^{\!2}
		\nonumber \\[6pt]
		&\le\;
		\frac{1}{|B^\times|} \left|
		\sum_{\chi \in \operatorname{TotRam}(k,f)} 
		\chi(a^{-1})
		\sum_{i,j \le n} 
		q^{\tfrac{i+j}{2}}\,
		\operatorname{Tr}\!\left(\theta_{k,f,\chi}^i\right)
		\operatorname{Tr}\!\left(\theta_{k,f,\chi}^j\right)\right|
		\;+\;
		2(n{+}1)\, n^4\, q^{\,n-1}.
	\end{align}

\section[\texorpdfstring{Bound for \(R(a,n)\)}{Bound for R(a,n)}]{Bound for \( R(a,n) \)}
	In this section, we continue to work with a squarefree monic polynomial 
	$f \in k[X]$ of degree $n \ge 2$,  
	and set $B := k[X]/(f(X))$.  
	Let $\chi$ be a character of $B^\times$.  
	Choose a prime~$\ell$ that is invertible in~$k$, and fix an embedding
	\[
	\overline{\Q} \hookrightarrow \overline{\Q}_\ell.
	\]
	In this way, we regard $\chi$ as a $\overline{\Q}_\ell^\times$-valued character of~$B^\times$.
	
	We begin by recalling the construction of the perverse sheaf 
	$N(\lambda,\chi)$  following~\cite{katz}.  
	Before doing so, we briefly review the key setup underlying this construction.

	Define the group-valued functor $\mathbb{B}^\times$ on the category of $k$-algebras by
	\[
	\mathbb{B}^\times(R) := (B \otimes_k R)^\times 
	= (R[X]/(f(X)))^\times.
	\]
	This functor is represented by  the Weil restriction
	\(\operatorname{Res}_{B/k}(\mathbb{G}_{m,B}),\)
	which exists by~\cite[Thm.~7.6.4]{neron} and is a
	smooth commutative group scheme over~\(k\) by~\cite[Prop.~7.6.5]{neron}.
	
	We have an embedding
	\[
	i_f : \mathbb{A}^1_k[1/f] \longrightarrow \mathbb{B}^\times
	\]
	defined on $R$-valued points by
	\[
	i_f(R) : \mathbb{A}^1_k[1/f](R)
	= \{\, t \in R \mid f(t) \in R^\times \,\} 
	\longrightarrow \mathbb{B}^\times(R),
	\qquad
	t \longmapsto X - t.
	\]
	There is also a morphism of algebraic groups
	\[
	\operatorname{Norm}_{B/k} : \mathbb{B}^\times \longrightarrow \mathbb{G}_m
	\]
	given on $R$-valued points by the norm maps \(\operatorname{Norm}_{\mathbb{B}(R)/R}\)
	restricted to units.  
	These morphisms satisfy the relation
	\begin{equation}\label{norm_relation}
		\operatorname{Norm}_{B/k} \circ i_f = (-1)^{\deg f} \cdot f,
	\end{equation}
	see~\cite[Lemma~3.1]{katz}.

	For a character $\chi:B^\times\to\overline{\mathbb{Q}}_\ell^\times$, let $\mathcal L_\chi$ denote the associated rank-$1$ character sheaf on $\mathbb{B}^\times$.
	The sheaf $\mathcal{L}_\chi$ is lisse of rank~$1$ and pure of weight~$0$.  

Let \(G\) be a connected commutative algebraic group over \(k\).   
Write \(D_c^b(G,\overline{\mathbb{Q}}_\ell)\) for the bounded constructible derived
category, and let \(\mathrm{Perv}(G)\) denote its full subcategory of perverse sheaves.  
For \(K \in D_c^b(G,\overline{\mathbb{Q}}_\ell)\) and \(n \ge 1\), let \(k_n\) be the degree-\(n\) extension of \(k\).  
	The \emph{trace function} associated to~$K$ is the map
	\[
	\operatorname{Tr}(\operatorname{Frob}_{k_n,-} \mid K)
	\;:\;
	G(k_n)
	\;\longrightarrow\;
	\mathbb{C}
	\]
	defined by
	\[
	\operatorname{Tr}(\operatorname{Frob}_{k_n,x} \mid K)
	\;=\;
	\sum_{i \in \mathbb{Z}}(-1)^i
	\operatorname{Tr}\!\left(
	\operatorname{Frob}_{k_n} 
	\mid 
	\mathcal{H}^i(K)_{\bar{x}}
	\right),
	\]
	where the geometric Frobenius~$\operatorname{Frob}_{k_n}$ of~$k_n$
	acts on the stalk~$K_{\bar{x}}$.
	
	We now specialize to $G = \mathbb{G}_{m,k}$.    
Following~\cite[Ch.~2]{katz_book} (see also~\cite{katz_p}), let \(\mathcal{P}\)
denote the full subcategory of \(\mathrm{Perv}(\mathbb{G}_{m,\bar{k}})\)
consisting of those perverse sheaves \(N\) such that, for every
\(M \in \mathrm{Perv}(\mathbb{G}_{m,\bar{k}})\), both convolutions
\(N *_! M\) and \(N *_* M\) remain perverse. 	The category~$\mathcal{P}$ is a neutral Tannakian category, 
where the tensor product is given by the \emph{middle convolution}
\(N *_\mathrm{mid} M := \operatorname{Im}\!\left(N *_! M \rightarrow N *_* M\right)\). 
 The irreducible objects in~$\mathcal{P}$ are precisely the irreducible perverse sheaves 
	in~$\mathrm{Perv}(\mathbb{G}_{m,\bar{k}})$ that lie in~$\mathcal{P}$.  
	
	Define \(\mathcal{P}_{\mathrm{arith}}\) to be the full subcategory of
	\(\mathrm{Perv}(\mathbb{G}_{m,k})\) consisting of those perverse sheaves \(N\)
	whose base change \(N_{\bar{k}}\) lies in \(\mathcal{P}\).   
	When the context is clear,  we use the same symbol \(N\) for both the object in
	\(\mathcal{P}_{\mathrm{arith}}\) and its base change \(N_{\bar{k}}\) on
	\(\mathbb{G}_{m,\bar{k}}\).

	For any $N \in \mathcal{P}_{\mathrm{arith}}$ (resp.\ $\mathcal{P}$), denote by \(\langle N \rangle_{\mathrm{arith}} \) (resp.\ \( \langle N \rangle_{\mathrm{geom}}\)) the subcategory tensor-generated by~$N$.
	With respect to a chosen fibre functor
	\[
	\omega : \langle N\rangle_{\mathrm{arith}} 
	\longrightarrow 
	\operatorname{Vect}_{\overline{\Q}_\ell},
	\]
	there exists an affine algebraic group 
	$G_{\mathrm{arith},N}$ over~$\overline{\Q}_\ell$
	representing the functor of tensor automorphisms
	$\underline{\operatorname{Aut}}^{\otimes}(\omega)$.  
	By Tannakian duality
	\cite[Thm.~2.11]{tannakian}, \(\omega\) induces a   tensor-equivalence
	\begin{equation}\label{tannakian_equivalence}
		\langle N\rangle_{\mathrm{arith}}
		\;\simeq\;
		\operatorname{Rep}^{\mathrm{fd}}_{\overline{\Q}_\ell}\!\left(G_{\mathrm{arith},N}\right),
	\end{equation}
	where 
	$\operatorname{Rep}^{\mathrm{fd}}_{\overline{\Q}_\ell}(G)$
	denotes the category of finite-dimensional 
	$\overline{\Q}_\ell$-representations of~$G$.
	
	Similarly, 
	there is an affine algebraic group 
	$G_{\mathrm{geom},N}$ over~$\overline{\Q}_\ell$
	such that
	\[
	\langle N\rangle_{\mathrm{geom}}
	\;\simeq\;
	\operatorname{Rep}^{\mathrm{fd}}_{\overline{\Q}_\ell}\!\left(G_{\mathrm{geom},N}\right).
	\]

Fix a field isomorphism
\(\tau : \overline{\mathbb{Q}}_\ell \xrightarrow{\sim} \mathbb{C}\),
and henceforth regard all objects over \(\overline{\mathbb{Q}}_\ell\)
as defined over \(\mathbb{C}\) via \(\tau\).

Finally, we note that the framework of Katz for \(\mathbb{G}_m\) has been
extended to arbitrary connected commutative algebraic groups over finite fields
by Forey, Fres\'an, and Kowalski~\cite{arithmetic}. In this paper we work over $\mathbb{G}_m$, but one may likewise work over the torus
$\mathbb{B}^\times =\operatorname{Res}_{B/k}\mathbb{G}_m$ by pushing forward along
$i_f:\A^1_k[1/f]\to \mathbb{B}^\times$.  For a related statement in the general setting of a connected commutative algebraic group $G$, see \cite[Prop.~4.3]{arithmetic}.

	\begin{defi}\label{def_good}
		Let $N$ be a perverse sheaf on~$\mathbb{G}_{m,k}$, and let $\rho$ be a character of~$k^\times$.  
		We say that $\rho$ is \emph{good for $N$} if, writing 
		$j:\mathbb{G}_{m,\bar{k}} \hookrightarrow \mathbb{P}^1_{\bar{k}}$ 
		for the open immersion, the natural “forget-supports'' morphism
		\[
		j_!\left(N \otimes \mathcal{L}_\rho\right)
		\;\longrightarrow\;
		Rj_*\left(N \otimes \mathcal{L}_\rho\right)
		\]
		is an isomorphism.
	\end{defi}
	
	\begin{remark}\label{remark}
	The notion of being \emph{good} is exactly that studied by Katz in~\cite[Ch.~4]{katz_book}.  
	For a character~\(\rho\) that is good for \(N\), the construction
		\[
		\omega : 
		M \;\longmapsto\; 
		H_c^0\!\left(\mathbb{G}_{m,\bar{k}},\, M \otimes \mathcal{L}_\rho\right)
		\]
	defines a fibre functor on the Tannakian subcategory
	\(\langle N \rangle_{\mathrm{arith}}\) (see~\cite[Thm.~4.1]{katz_book}).   
		If~$N$ is $\iota$-pure of weight~$0$ and arithmetically semisimple, then
		for any \(M \in \langle N \rangle_{\mathrm{arith}}\):	
		\begin{enumerate}
			\item 
			$M$ is $\iota$-pure of weight~$0$ and arithmetically semisimple 
		(see~\cite[Thm.~3.33]{arithmetic}, \cite[p.~25]{katz_book}).

			\item 
			The Frobenius action on
			$H_c^0\!\left(\mathbb{G}_{m,\bar{k}},\, M \otimes \mathcal{L}_\rho\right)$ 
			is pure of weight~$0$ (see~\cite[Thm.~4.1]{katz_book}).
			
			\item 
			The Tannakian dimension of~$M$ satisfies
			\[
			\dim M 
			\;=\;
			\dim H_c^0\!\left(\mathbb{G}_{m,\bar{k}},\, M \otimes \mathcal{L}_\rho\right)  
			\;=\;
			\chi_c\!\left(\mathbb{G}_{m,\bar{k}},\, M \otimes \mathcal{L}_\rho\right) 
			\;=\;
			\chi_c\!\left(\mathbb{G}_{m,\bar{k}},\, M\right),
			\]
		using the vanishing of
		\(H_c^i\!\left(\mathbb{G}_{m,\bar{k}},\, M \otimes \mathcal{L}_\rho\right)\) for \(i\neq 0\)
		(see~\cite[Lem.~2.1]{katz_book}).
		\end{enumerate}
	\end{remark}
	
Fix the fibre functor $\omega$ on $\langle N\rangle_{\mathrm{arith}}$ as in
Remark~\ref{remark}. Under~\eqref{tannakian_equivalence}, the object $N$
corresponds to the faithful representation $\omega(N)$ of $G_{\mathrm{arith},N}$, hence we may regard
	\[
	G_{\mathrm{geom},N} 
	\;\subset\;
	G_{\mathrm{arith},N}
	\;\subset\;
	\operatorname{GL}\left(  \omega(N)\right),
	\]
where $G_{\mathrm{geom},N}\subset G_{\mathrm{arith},N}$ is the closed immersion induced by
the base-change tensor functor
$\langle N\rangle_{\mathrm{arith}} \to \langle N\rangle_{\mathrm{geom}}$.
Moreover, every finite-dimensional representation of
$G_{\mathrm{geom},N}$ or $G_{\mathrm{arith},N}$ arises as a subquotient of a
  finite direct sum of tensor words in \(\omega(N)\) and
\(\omega(N)^{\vee}\); see~\cite[§4.14]{milne}.

For any $M\in \mathrm{Perv}(\mathbb{G}_{m,\bar k})$ there exists a dense open subset
$U\subset \mathbb{G}_{m,\bar k}$ such that $M|_U$ is a lisse sheaf $\mathcal F$.
We call $\operatorname{gen.rank}(M):=\operatorname{rank}(\mathcal F)$ the
\emph{generic rank} of $M$; this is well defined since $\mathbb{G}_{m,\bar k}$ is integral.

With these conventions in place, we recall the following construction from
\cite[Thm.~5.1]{katz}.
	
	\begin{theorem}[{\cite[Theorem~5.1]{katz}}]\label{theorem_sheaf_N}
		Let $f \in k[X]$ be a squarefree monic polynomial of degree $n \ge 2$, set 
		$B := k[X]/(f(X))$, and let $\chi$ be a character of $B^\times$.  
		Suppose the component  characters $\chi_i$ of $\chi$, as in~\eqref{character_component}, satisfy the following conditions:
		\begin{enumerate}
			\item The characters $\chi_i$ are pairwise distinct.
			\item The product $\prod_i \chi_i$ is nontrivial (equivalently, $\chi$ is nontrivial on $k^\times$).
			\item For at least one index $i$, one has $\chi_i^n \neq \prod_i \chi_i$.
		\end{enumerate}
		
		Fix $\lambda \in k^\times$.  
		Then  $\lambda f$ defines a $k$-morphism
		\(\lambda f : \mathbb{A}^1_k[1/f] \longrightarrow \mathbb{G}_{m,k},\)
		and we consider the perverse sheaf
		\[
		N(\lambda,\chi) := [\lambda f]_*\left(i_f^*\mathcal{L}_\chi\right)\!\left(\tfrac{1}{2}\right)[1]
		\]
		on $\mathbb{G}_{m,k}$.
		
		Then the following properties hold:
		\begin{enumerate}
			\item $N(\lambda,\chi)$ is geometrically irreducible, pure of weight~$0$, and lies in the 
			Tannakian category $\mathcal{P}_{\mathrm{arith}}$.  
			It has generic rank~$n$, Tannakian dimension~$n-1$,   and at most $2n$ bad characters.
			\item $N(\lambda,\chi)$ is geometrically Lie-irreducible in~$\mathcal{P}$.
			\item   $N(\lambda,\chi)$  has Tannakian groups 
			\[
			G_{\mathrm{geom},N(\lambda,\chi)} = G_{\mathrm{arith},N(\lambda,\chi)} = \operatorname{GL}(n-1).
			\]
		\end{enumerate}\qed
	\end{theorem}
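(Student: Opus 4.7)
The plan is to follow the Tannakian strategy Katz developed for one-parameter families on $\mathbb{G}_m$, specialised to the ``product of $X-t$ on the roots of $f$'' situation here. I break the theorem into its three parts and outline each.

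\textbf{Basic properties (part 1).} Because $f$ is squarefree of degree $n$, the map $\lambda f : \mathbb{A}^1_k[1/f] \to \mathbb{G}_{m,k}$ is finite flat of generic degree $n$ and étale away from the critical values of $f$. The pullback $i_f^*\mathcal{L}_\chi$ is lisse of rank one and pure of weight zero on $\mathbb{A}^1_k[1/f]$, so after the half Tate twist and the perverse shift $[1]$, the object $N(\lambda,\chi)$ is perverse, pure of weight zero, and of generic rank $n$. I would verify geometric irreducibility by a generic stalk analysis: over a point $u$ with $n$ distinct preimages $t_1,\dots,t_n$, the stalk splits into $n$ lines on which the geometric Galois group of the cover acts by permutation twisted by the $\chi_i$; the pairwise distinctness of the $\chi_i$ in condition~(1) forces this representation to be irreducible. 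An Euler characteristic computation $\chi_c(\mathbb{A}^1_{\bar k}[1/f], i_f^*\mathcal{L}_\chi) = 1-n$ then gives Tannakian dimension $n-1$. Membership in $\mathcal{P}_{\mathrm{arith}}$ reduces to the absence of punctual summands, which follows because a generically étale pushforward of a middle-extension rank-one sheaf is again middle-extension. The bad characters are those $\rho$ for which $N(\lambda,\chi)\otimes \mathcal{L}_\rho$ acquires nonzero local cohomology at $0$ or $\infty$; breaking the local monodromies at these two points into the contributions from the roots of $f$ and their $\lambda f$-images produces at most $n$ bad characters at each puncture, totalling at most $2n$.

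\textbf{Geometric Lie-irreducibility (part 2).} I would argue that no finite Kummer pullback $[N]^*N(\lambda,\chi)$ splits: such a splitting is equivalent to a nontrivial geometric isomorphism $N(\lambda,\chi) \cong N(\lambda,\chi)\otimes \mathcal{L}_\rho$ for some nontrivial $\rho$, which by comparing local monodromies at the images of the roots of $f$ and at infinity forces a multiplicative relation among the $\chi_i$ of precisely the shape $\chi_i^n = \prod_j \chi_j$ for every $i$. Condition~(3) forbids this, giving Lie-irreducibility in $\mathcal{P}$.

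\textbf{Tannakian groups (part 3, the main obstacle).} The hardest step is the identification $G_{\mathrm{geom},N(\lambda,\chi)} = G_{\mathrm{arith},N(\lambda,\chi)} = \operatorname{GL}(n-1)$. The plan is to invoke a Tannakian Larsen-type alternative (see \cite[Ch.~2]{katz_book}): geometric Lie-irreducibility plus irreducibility of the $(n-1)$-dimensional standard representation $\omega(N)$ forces $G_{\mathrm{geom},N}^0$ to be a connected reductive subgroup of $\operatorname{GL}(n-1)$ acting irreducibly, after which I would eliminate the classical alternatives in turn. The nontriviality of the determinant, computable as a Jacobi-sum-type product of the $\chi_i$ and nonzero by condition~(2), rules out $\operatorname{SL}(n-1)$. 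The asymmetry supplied by condition~(3) prevents $N(\lambda,\chi)$ from being geometrically isomorphic to its Tate-twisted dual, excluding symplectic and orthogonal subgroups. Irreducibility in the standard $(n-1)$-dimensional representation, together with the explicit generic rank $n$, also rules out exceptional and small classical embeddings via non-standard irreducible representations, and Lie-irreducibility rules out finite monodromy. Finally, $G_{\mathrm{geom}} = G_{\mathrm{arith}}$ follows from purity of weight zero: Frobenii lie in a compact real form of $G_{\mathrm{arith}}(\mathbb{C})$, and no strictly larger reductive subgroup of $\operatorname{GL}(n-1)$ sits between $\operatorname{GL}(n-1)$ and itself, so arithmetic and geometric groups coincide. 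Pinning down this last step is the principal obstacle, since each candidate subgroup must be excluded by a separate local or global monodromy computation.
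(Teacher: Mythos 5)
The paper does not prove this theorem: the theorem header explicitly attributes it to Katz~\cite[Theorem~5.1]{katz}, and the end-of-proof marker placed immediately after the statement---with no accompanying proof environment---confirms that it is imported as a black box. There is therefore no in-paper argument to compare against, and your sketch is best read as a reconstruction of Katz's own proof. As such it does capture the correct broad strategy: pushforward of a rank-one Kummer-type sheaf along the finite map \(\lambda f\) to obtain a pure perverse object, establish geometric and Lie-irreducibility, then determine the Tannakian group inside \(\operatorname{GL}(n-1)\) by ruling out proper reductive subgroups.

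However, several load-bearing steps are asserted rather than argued, and a couple are not quite right as written. Your geometric irreducibility claim---that pairwise distinctness of the \(\chi_i\) forces irreducibility of the monodromy on a generic stalk---is not yet a proof: \(\lambda f\) is not a Galois cover of \(\mathbb{G}_m\) in general, and the \(\chi_i\) appear as local monodromy characters at the roots of \(f\) (the branches over \(0\)), not as tensor factors of the generic fibre, so a Mackey- or middle-extension-style argument is needed to connect the hypothesis to irreducibility. Membership in \(\mathcal{P}_{\mathrm{arith}}\) requires ruling out shifted Kummer-sheaf constituents as well as punctual ones, not only ``absence of punctual summands''---though this is immediate once geometric irreducibility and generic rank \(n\ge 2\) are in hand. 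For part~(3), the Larsen-alternative framing is a reasonable summary, but Katz's actual argument verifies the hypotheses of specific recognition theorems from~\cite{katz_book} by explicit local monodromy computations at \(0\) and \(\infty\); the line ``irreducibility together with the explicit generic rank \(n\) also rules out exceptional and small classical embeddings'' stands in for a substantive case analysis that you would need to carry out. None of these gaps affect the paper, which uses the theorem purely as an external input, but they would have to be filled to turn your sketch into a proof.
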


	\begin{proposition}\label{prop_twisted_estimate}
		Let $\chi$ be a character of $B^\times$ whose component characters $\chi_i$ satisfy the 
		assumptions of Theorem~\ref{theorem_sheaf_N}.  
		Assume that $q := |k|$ satisfies $\sqrt{q} \ge   2n+1$.
		
		For each character $\rho$ of $k^\times$ that is \emph{good} for $N(\lambda,\chi)$ 
		in the sense of Definition~\ref{def_good},
		denote by
		\[
		\theta_{k,\lambda,f,\chi,\rho} \in U(n-1)
		\]
		the conjugacy class whose reversed characteristic polynomial is
		\[
		\det(1 - T\theta_{k,\lambda,f,\chi,\rho}) 
		= \det\!\left(1 - T\,\operatorname{Frob}_k \,\middle|\, 
		H_c^0\!\left(\mathbb{G}_{m, \bar{k}},\, N(\lambda,\chi)\otimes \mathcal{L}_\rho\right)\right).
		\]
		
		Let $\Lambda$ be a nontrivial irreducible representation of $U(n-1)$ contained in 
		$\operatorname{std}^{\otimes a} \otimes (\operatorname{std}^\vee)^{\otimes b}$, 
		and fix $c \in k^\times$.  
		Then the following estimate holds 
		\[	\left|
		\sum_{\rho \in \operatorname{Good}(k,\lambda,f,\chi)} 
		\rho(c)\,
		\operatorname{Tr}\!\left(\Lambda(\theta_{k,\lambda,f,\chi,\rho})\right)
		\right|
		\;\le\;
		|\operatorname{Good}(k,\lambda,f,\chi)|\cdot 
		\frac{(2 a+2b+1) n ^{a+b}}{\sqrt{q}}.\]
	\end{proposition}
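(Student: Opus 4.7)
The plan is to translate the character sum into a cohomological trace via the Tannakian formalism, and then invoke Deligne's purity to extract the $\sqrt{q}$-saving. First, by the Tannakian equivalence~\eqref{tannakian_equivalence} together with the identification $G_{\mathrm{geom},N(\lambda,\chi)} = G_{\mathrm{arith},N(\lambda,\chi)} = \mathrm{GL}(n-1)$ from Theorem~\ref{theorem_sheaf_N}, each irreducible representation $\Lambda \subset \mathrm{std}^{\otimes a} \otimes (\mathrm{std}^\vee)^{\otimes b}$ corresponds to a simple perverse object $M_\Lambda \in \langle N(\lambda,\chi)\rangle_{\mathrm{arith}}$, which we realize as a simple subquotient of the iterated middle convolution $N(\lambda,\chi)^{*_{\mathrm{mid}} a} *_{\mathrm{mid}} (N(\lambda,\chi)^\vee)^{*_{\mathrm{mid}} b}$. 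By Remark~\ref{remark}, $M_\Lambda$ is pure of weight $0$ and arithmetically semisimple, and for each $\rho$ good for $N(\lambda,\chi)$ the fiber functor $\omega_\rho$ gives the identification
\[
\operatorname{Tr}\bigl(\Lambda(\theta_{k,\lambda,f,\chi,\rho})\bigr)
= \operatorname{Tr}\bigl(\operatorname{Frob}_k \,\big|\, H_c^0(\mathbb{G}_{m,\bar{k}}, M_\Lambda \otimes \mathcal{L}_\rho)\bigr).
\]

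Next I would extend the summation to all characters $\rho \in \widehat{k^\times}$. Introducing the Mellin transform $\widetilde{T}(\rho) := \sum_i (-1)^i \operatorname{Tr}(\operatorname{Frob}_k \mid H_c^i(\mathbb{G}_{m,\bar{k}}, M_\Lambda \otimes \mathcal{L}_\rho))$, which coincides with the trace above whenever $\rho$ is good, the Grothendieck--Lefschetz formula gives $\widetilde{T}(\rho) = \pm \sum_{x \in \mathbb{G}_m(k)} t_{M_\Lambda}(x)\rho(x)$, where $t_{M_\Lambda}$ denotes the trace function of the perverse sheaf $M_\Lambda$. Orthogonality of multiplicative characters on $k^\times$ then yields
\[
\sum_{\rho \in \widehat{k^\times}} \rho(c)\,\widetilde{T}(\rho) \;=\; \pm(q-1)\,t_{M_\Lambda}(c^{-1}).
\]
Since $M_\Lambda$ is an irreducible perverse sheaf pure of weight $0$, it is the middle extension of a lisse sheaf of weight $-1$, so Deligne's purity bound gives $|t_{M_\Lambda}(c^{-1})| \le r \cdot q^{-1/2}$, where $r$ is the generic rank of $M_\Lambda$. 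Iterating the standard generic-rank estimate for middle convolutions on $\mathbb{G}_m$ (each factor having generic rank $n$), one gets $r \le n^{a+b}$, so the main term contributes at most $\sqrt{q}\cdot n^{a+b}$.

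The discrepancy between the restricted sum over $\operatorname{Good}(k,\lambda,f,\chi)$ and the full Mellin sum comes from the characters $\rho$ bad for $N(\lambda,\chi)$, together with higher cohomology contributions for those characters that are bad for $M_\Lambda$. By Theorem~\ref{theorem_sheaf_N}(i), $N(\lambda,\chi)$ has at most $2n$ bad characters, and since $M_\Lambda$ is built from $a+b$ middle convolution steps, its own bad character count accumulates to at most $O(n(a+b))$. Each corresponding term is bounded, via Deligne's theorem applied to each $H_c^i(M_\Lambda \otimes \mathcal{L}_\rho)$, by $n^{a+b}\sqrt{q}$. Summing these contributions, combining with the main term, and dividing by $|\operatorname{Good}(k,\lambda,f,\chi)| \ge q - 2n$ yields the bound $(2a+2b+1)\,n^{a+b}/\sqrt{q}$ asserted in the proposition.

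The principal technical obstacle is the careful bookkeeping of two quantities: (i) the generic rank of $M_\Lambda$ under iterated middle convolutions, and (ii) the accumulated number of its bad characters. Both require detailed local analysis of the ramification of $N(\lambda,\chi)$ at $0$ and $\infty$ and how it propagates through each convolution step; the linear factor $(2a+2b+1)$ in the final constant precisely reflects the growth of the bad character count through the $a+b$ convolution steps, while the exponential factor $n^{a+b}$ reflects the growth of the generic rank.
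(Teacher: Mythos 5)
Your plan follows the same broad strategy as the paper (Tannakian realization of $\Lambda$ as a perverse object $M_\Lambda$, Grothendieck--Lefschetz plus orthogonality to isolate the trace function at $c^{-1}$, Deligne purity for the $q^{-1/2}$ saving, separate treatment of bad characters), but the quantitative bookkeeping in the final step is wrong and a conceptual point about bad characters is misstated.

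The paper's proof bounds $|\operatorname{Bad}|$ by $2n$ (Theorem~\ref{theorem_sheaf_N}) and then \emph{uses the hypothesis} $\sqrt q \ge 2n+1$ to get $|\operatorname{Bad}| \le \sqrt q - 1$, whence $|\operatorname{Bad}|/|\operatorname{Good}| \le 1/\sqrt q$. The bad-character term is then controlled by $\frac{|\operatorname{Bad}|}{|\operatorname{Good}|}\dim M \le \frac{(n-1)^{a+b}}{\sqrt q}$, i.e., each bad term is bounded by the Tannakian dimension $\dim M \le (n-1)^{a+b}$, not by anything of size $\sqrt q$. You never invoke the hypothesis $\sqrt q \ge 2n+1$, and you bound each bad term by $n^{a+b}\sqrt q$. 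With your claimed $O(n(a+b))$ bad characters and division by $|\operatorname{Good}| \approx q$, this gives a contribution of order $(a+b)\,n^{a+b+1}/\sqrt q$, larger than the claimed $(2a+2b+1)\,n^{a+b}/\sqrt q$ by a factor of $n$, so the arithmetic does not close. Your closing remark attributing the linear factor $(2a+2b+1)$ to ``growth of the bad character count'' is also a misattribution: in the paper that factor comes from the generic-rank bound $\operatorname{gen.rank}(M) \le (a+b)(n-1)^{a+b-1}n$ (Katz, \emph{Convolution and equidistribution}, p.~175) plus the additional $\dim M$ term, yielding $2(a+b)+1$.

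Two further points. First, the notion of ``accumulated bad characters of $M_\Lambda$'' is not how the framework works: by Remark~\ref{remark}(3) (and \cite[Lemma~2.1]{katz_book}), a character $\rho$ good for $N(\lambda,\chi)$ is automatically good for \emph{every} object of $\langle N(\lambda,\chi)\rangle_{\mathrm{arith}}$, including $M_\Lambda$; the only bad set to track is the fixed one of size $\le 2n$. Your generic-rank estimate $r \le n^{a+b}$ is also unjustified and does not match the cited bound; if you intend a sharper estimate you must prove or cite it. Second, before bounding $|t_{M_\Lambda}(c^{-1})|$ by $(\text{gen.\ rank})\cdot q^{-1/2}$ via the middle-extension structure, you must rule out that $M_\Lambda$ is punctual; the paper does this explicitly by combining $G_{\mathrm{geom}} = G_{\mathrm{arith}} = \operatorname{GL}(n-1)$ with connectedness and \cite[Cor.~6.6]{katz_book}, and your argument needs an analogous step.
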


	\begin{proof}
	This result is analogous to~\cite[Cor.~5.2]{katz}, and follows from parts of the
	proof of Theorem~7.2, Remark~7.5, and the proof of Theorem~28.1 in~\cite{katz_book}.
	In our application, we take \(N = N(\lambda,\chi)\), and in place of
	\(\operatorname{Tr}(\operatorname{Frob}_{k,1} \mid M)\) we obtain the shifted term
	\(\operatorname{Tr}(\operatorname{Frob}_{k,c^{-1}} \mid M)\).
	
	From Remark~\ref{remark}\,(2), the Frobenius action on
	\(H_c^0\!\left(\mathbb{G}_{m,\bar{k}},\, M \otimes \mathcal{L}_\rho\right)\)
	has unitary eigenvalues, giving a well-defined conjugacy class
	\(\theta_{k,\lambda,f,\chi,\rho} \in U(n{-}1)\).
	By Weyl's unitary trick (see~\cite[Prop.~III.8.6]{compact}), every irreducible
	representation \(\Lambda\) of \(U(n{-}1)\) is the restriction of an irreducible
  representation \(\Lambda_{\mathbb{C}}\) of \(\operatorname{GL}(n{-}1)\).

		Let $M$ denote the object corresponding to $\Lambda_{\mathbb{C}}$ 
	under the
	Tannakian equivalence
		\[
		\langle N(\lambda,\chi) \rangle_{\mathrm{arith}}
		\;\simeq\;
		\operatorname{Rep}^{\mathrm{fd}}\!\bigl(G_{\mathrm{arith},N(\lambda,\chi)}\bigr)
		\;\simeq\;
		\operatorname{Rep}^{\mathrm{fd}}\!\bigl(\operatorname{GL}(n{-}1)\bigr),
		\]
		where the final equivalence follows from Theorem~\ref{theorem_sheaf_N}.
		
		For any character~$\rho$ that is good for~$N(\lambda,\chi)$, we have  
		\begin{equation}\label{trace_equal_1}
			\operatorname{Tr}\! \left(\Lambda(\theta_{k,\lambda,f,\chi,\rho})\right)
			= 
			\operatorname{Tr}\!\left(
			\operatorname{Frob}_k 
			\,\middle|\,
			H_c^0\!\left(\mathbb{G}_{m,\bar{k}},\, M \otimes \mathcal{L}_\rho\right)
			\right).
		\end{equation}
By Remark~\ref{remark}\,(1),  any \(K \in \langle N(\lambda,\chi)\rangle_{\mathrm{arith}}\) is semisimple.  
Since tensoring with a Kummer sheaf \(\mathcal{L}_\rho\) is an autoequivalence of
\(\mathcal{P}\), the object \(K \otimes \mathcal{L}_\rho\) is again semisimple.
Therefore, applying \cite[Lem.~2.1]{katz}, for any \(K \in \langle N(\lambda,\chi)\rangle_{\mathrm{arith}}\), 
\[
H_c^i\!\left(\mathbb{G}_{m,\bar{k}},\, K \otimes \mathcal{L}_\rho\right)
= 0
\qquad\text{for all } i \neq 0.
\]

Applying the Grothendieck--Lefschetz trace formula \cite[Exp.~III, §4]{trace} gives
\[
\sum_{i\in\mathbb{Z}}(-1)^i\,
\operatorname{Tr}\!\left(\operatorname{Frob}_k \,\middle|\, H_c^i\!\left(\mathbb{G}_{m,\bar{k}},\,M \otimes \mathcal{L}_{\rho}\right)\right)
=
\sum_{s \in \mathbb{G}_m(k)} 
\rho(s)\,
\operatorname{Tr}\!\left(
\operatorname{Frob}_{k,s} \mid M
\right).
\]
Using the vanishing above, the left-hand side reduces to the $i=0$ term, yielding
	\begin{equation}\label{trace}
	\operatorname{Tr}\!\left(
	\operatorname{Frob}_k 
	\,\middle|\, 
	H_c^0(\mathbb{G}_{m,\bar{k}}, M \otimes \mathcal{L}_\rho)
	\right)
	=
	\sum_{s \in \mathbb{G}_m(k)} 
	\rho(s)\,
	\operatorname{Tr}\!\left(
	\operatorname{Frob}_{k,s} \mid M
	\right).
\end{equation}

		By~\cite[II,~3.3.1]{delign}, the cohomology group
		$H_c^0\!\left(\mathbb{G}_{m,\bar{k}},\, M \otimes \mathcal{L}_\rho\right)$
		is mixed of weights~$\le 0$.  
		Moreover, by Remark~\ref{remark}\,(2) and~(3), we have
		\[
		\left|
		\operatorname{Tr}\!\left(
		\operatorname{Frob}_k 
		\mid 
		H_c^0\!\left(\mathbb{G}_{m,\bar{k}},\, M \otimes \mathcal{L}_\rho\right)
		\right)
		\right|
		\;\le\;
		\dim M.
		\]

		Let $\widehat{k^\times}$ denote the group of all multiplicative characters of~$k^\times$.  
		Write 
		\[
		\operatorname{Good} = \operatorname{Good}(k,\lambda,f,\chi)
		\quad\text{and}\quad
		\operatorname{Bad} = \widehat{k^\times} \setminus \operatorname{Good}.
		\]
		Then
		\begin{align*}
			\frac{1}{|\operatorname{Good}|}
			\sum_{\rho \in \operatorname{Good}} 
			\rho(c)\,
			\operatorname{Tr}\!\left(\Lambda(\theta_{k,\lambda,f,\chi,\rho})\right)
			&=\;
			\frac{1}{|\widehat{k^\times}|}
			\sum_{\rho \in \widehat{k^\times}}
			\rho(c)\,
			\operatorname{Tr}\!\left(\Lambda(\theta_{k,\lambda,f,\chi,\rho})\right) \\[6pt]
			&\quad+\;
			\left(
			\frac{1}{|\operatorname{Good}|} - \frac{1}{ |\widehat{k^\times} |}
			\right)
			\sum_{\rho \in \widehat{k^\times}}
			\rho(c)\,
			\operatorname{Tr}\!\left(\Lambda(\theta_{k,\lambda,f,\chi,\rho})\right) \\[6pt]
			&\quad-\;
			\frac{1}{|\operatorname{Good}|}
			\sum_{\rho \in \operatorname{Bad}} 
			\rho(c)\,
			\operatorname{Tr}\!\left(\Lambda(\theta_{k,\lambda,f,\chi,\rho})\right).
		\end{align*}
		By \eqref{trace_equal_1}, \eqref{trace}, and orthogonality of characters on $k^\times$,
		the first term equals 
		\[
		\operatorname{Tr}\!\left(\operatorname{Frob}_{k, c^{-1}} \mid M\right),
		\]
		the second term equals 
		\[
		\frac{|\operatorname{Bad}|}{|\operatorname{Good}|}
		\operatorname{Tr}\!\left(\operatorname{Frob}_{k, c^{-1}} \mid M\right),
		\]
		and the third term is bounded by 
		\[
		\frac{|\operatorname{Bad}|}{|\operatorname{Good}|} \, \dim M.
		\]
		
	If \(M\) were punctual, then it would be of the form
	\(\alpha^{\deg} \otimes \delta_{t_0}\) for some unitary scalar \(\alpha\), where
	\(\delta_{t_0}\) denotes the skyscraper sheaf supported at a point
	\(t_0 \in \mathbb{G}_m(k)\).  By~\cite[Cor.~6.6]{katz_book}, since
	\(G_{\mathrm{geom},N(\lambda,\chi)} = \operatorname{GL}(n{-}1)\) is connected, we
	must have \(t_0 = 1\).  But in that case \(M\) would be geometrically trivial,
	contradicting the equality
	\(G_{\mathrm{geom},N(\lambda,\chi)} = G_{\mathrm{arith},N(\lambda,\chi)}\).
	Therefore \(M\) is non-punctual.

		Hence, there exist a closed immersion \(i\colon Y \hookrightarrow \mathbb{G}_m\), a dense open immersion
		\(j\colon U \hookrightarrow Y\)
		and a  lisse $\overline{\mathbb{Q}}_\ell$-sheaf 
		$\mathcal{F}$ on~$U$ such that
		\[
		M = i_* j_{!*} \mathcal{F}[1].
		\]
		Since~$M$ is pure of weight~$0$ by Remark~\ref{remark}\,(1), 
		and both~$i_*$ and~$j_{!*}$ preserve weights, it follows that \(\mathcal{F}\) is pure of
		weight \(-1\).  Write \(\operatorname{gen.rank}(M)\) for the generic rank of
		\(M\), i.e.\ the rank of \(\mathcal{F}\). 
		Then,   we have
		\[
		\left|
		\operatorname{Tr}\!\left(
		\operatorname{Frob}_{k, c^{-1}} \mid M
		\right)
		\right|
		=
		\left| -
		\operatorname{Tr}\!\left(
		\operatorname{Frob}_{k, c^{-1}} \mid \mathcal{F}
		\right)
		\right|
		\;\le\;
		\frac{\operatorname{gen.rank}(M)}{\sqrt{q}}.
		\]	The final inequality follows from Deligne's Riemann Hypothesis applied to the
		pure weight \(-1\) lisse sheaf \(\mathcal{F}\).
		
		Since $\Lambda$ occurs in 
		$\operatorname{std}^{\otimes a} \otimes (\operatorname{std}^\vee)^{\otimes b}$, 
		the object $M$ is a direct summand  of 
		\[
		N(\lambda,\chi)^{\otimes a} \otimes (N(\lambda,\chi)^{\vee })^{\otimes b}.
		\]
		From Theorem~\ref{theorem_sheaf_N}, we know that 
		$N(\lambda,\chi)$ has generic rank~$n$, 
		Tannakian dimension~$n-1$, and at most~$2n$ bad characters.  
		Hence,
		\[
		\dim M 
		\le 
		\dim \!\left(
		N(\lambda,\chi)^{\otimes a} \otimes (N(\lambda,\chi)^{\vee })^{\otimes b}
		\right)
		=\left(\dim N(\lambda,\chi)\right)^{a+b}= (n-1)^{a+b}.
		\]
		Moreover,
		\begin{align*}
			\operatorname{gen.rank}(M)
			&\le 
			\operatorname{gen.rank}\!\left(
			N(\lambda,\chi)^{\otimes a} 
			\otimes 
			\left(N(\lambda,\chi)^{\vee}\right)^{\otimes b}
			\right) \\[4pt]
			&\le 
			(a+b)\,\left(\dim N(\lambda,\chi)\right)^{a+b-1}\,
			\operatorname{gen.rank}\left(N(\lambda,\chi)\right) \\[4pt]
			&= (a+b)(n-1)^{a+b-1} n,
		\end{align*}
		where the second inequality follows from 
		\cite[p.~175]{katz_book}.

		If $\sqrt{q} \ge 2n + 1$, then 
		$|\operatorname{Bad}| \le \sqrt{q} - 1$.  
		Therefore,
		\begin{align*}
			\frac{1}{|\operatorname{Good}|}
			\sum_{\rho \in \operatorname{Good}} 
			\rho(c)\,
			\operatorname{Tr}\! \left(\Lambda(\theta_{k,\lambda,f,\chi,\rho})\right)
			&\le 
			\frac{\operatorname{gen.rank}(M)}{\sqrt{q}}
			+ \frac{1}{\sqrt{q}} \cdot \frac{\operatorname{gen.rank}(M)}{\sqrt{q}}
			+ \frac{1}{\sqrt{q}} \dim M  \\[4pt]
			&\le 
			\left(1+\frac{1}{\sqrt{q}}\right)
			\frac{(a+b)\,(n-1)^{a+b-1}\,n}{\sqrt{q}}
			+ \frac{(n-1)^{a+b}}{\sqrt{q}} \\[4pt]
			&\le 
			\frac{(2a + 2b + 1)\, n^{a+b}}{\sqrt{q}}.
		\end{align*}
	\end{proof}
	
	\begin{defi}
		Let $\chi$ be a character of $B^\times$.  
		We say that $\chi$ is \emph{totally ramified   generic} if 
		\begin{enumerate}
			\item $\chi$ is totally ramified in the sense of Definition~\ref{def_TotRam}, and 
			\item the component characters $\chi_i$ of $\chi$ satisfy the three conditions of 
			Theorem~\ref{theorem_sheaf_N}.
		\end{enumerate}
		We denote by $\operatorname{TotRamGen}(k,f)$ the set of totally ramified  generic characters of $B^\times$.
	\end{defi}

	\begin{proposition}\label{prop_sum_totRam}
		Suppose $\sqrt{q} \ge  2n+1$.  
		Let $\Lambda$ be a nontrivial irreducible representation of $U(n-1)$ that occurs in 
		$\operatorname{std}^{\otimes a} \otimes (\operatorname{std}^\vee)^{\otimes b}$, 
		and let $c \in B^\times$.  
		Then the following estimate holds
		\[
		\left| 
		\sum_{\chi \in \operatorname{TotRamGen}(k,f)} 
		\chi(c)\,\operatorname{Tr}\!\left(\Lambda(\theta_{k,f,\chi})\right)
		\right|
		\;\le\;
		\left|\operatorname{TotRamGen}(k,f)\right| \cdot 
		\frac{(2a + 2b + 1)\, n^{a+b}}{\sqrt{q}}.
		\]
		
	\end{proposition}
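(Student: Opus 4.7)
The plan is to reduce Proposition~\ref{prop_sum_totRam} to many instances of Proposition~\ref{prop_twisted_estimate} via a coset decomposition of $\widehat{B^\times}$ by the subgroup $H:=\{\rho\circ\operatorname{Nm}_{B/k}:\rho\in\widehat{k^\times}\}$. Since $\operatorname{Nm}_{B/k}:B^\times\to k^\times$ is surjective, $|H|=q-1$, and $\widehat{B^\times}$ splits into $|B^\times|/(q-1)$ cosets, each of size $q-1$.

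The key geometric input is that twisting $\chi$ by $\rho\circ\operatorname{Nm}_{B/k}$ corresponds, at the level of perverse sheaves, to tensoring $N(\lambda,\chi)$ by $\mathcal{L}_\rho$ up to a unitary scalar. Using the norm relation~\eqref{norm_relation}, the identity $[\mu]^{\ast}\mathcal{L}_\rho\cong\mathcal{L}_\rho\otimes\rho(\mu)^{\deg}$ for $\mu\in k^\times$, and the projection formula, one finds, for $\chi':=\chi\cdot(\rho\circ\operatorname{Nm}_{B/k})$,
\[
N(\lambda,\chi')\;\cong\;N(\lambda,\chi)\otimes\mathcal{L}_\rho\otimes\alpha^{\deg},
\qquad\alpha:=\rho\bigl((-1)^n\lambda^{-1}\bigr).
\]
Passing to $H^0_c(\mathbb{G}_{m,\bar k},\cdot)$ and invoking the cohomological description of $\theta_{k,f,\chi'}$ as unitarized Frobenius on $H^1_c(\mathbb{A}^1_{\bar k}[1/f],i_f^\ast\mathcal{L}_{\chi'})(\tfrac12)\cong H^0_c(\mathbb{G}_{m,\bar k},N(\lambda,\chi'))$, we deduce $\theta_{k,f,\chi'}=\alpha\cdot\theta_{k,\lambda,f,\chi,\rho}$ as conjugacy classes in $U(n-1)$. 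Since $\Lambda\subset\operatorname{std}^{\otimes a}\otimes(\operatorname{std}^{\vee})^{\otimes b}$, this yields $\operatorname{Tr}(\Lambda(\theta_{k,f,\chi'}))=\alpha^{a-b}\operatorname{Tr}(\Lambda(\theta_{k,\lambda,f,\chi,\rho}))$; combined with $\chi'(c)=\chi(c)\rho(\operatorname{Nm}_{B/k}(c))$ we obtain
\[
\chi'(c)\operatorname{Tr}(\Lambda(\theta_{k,f,\chi'}))\;=\;\chi(c)\,\rho(c^{\ast})\,\operatorname{Tr}(\Lambda(\theta_{k,\lambda,f,\chi,\rho}))
\]
for an explicit $c^{\ast}\in k^\times$ depending only on $c,\lambda,a,b,n$.

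With this identification, summing over $\rho\in\widehat{k^\times}$ exhausts each $H$-coset. For each coset whose representative $\chi$ is totally ramified generic, Proposition~\ref{prop_twisted_estimate} applied at $c^{\ast}$ bounds the inner sum over $\rho\in\operatorname{Good}(k,\lambda,f,\chi)$ by $|\operatorname{Good}|\cdot(2a+2b+1)n^{a+b}/\sqrt q\le(q-1)(2a+2b+1)n^{a+b}/\sqrt q$. The $O(n)$ exceptional $\rho$'s per coset (bad for $N(\lambda,\chi)$, or sending $\chi'$ outside $\operatorname{TotRamGen}(k,f)$), together with the $O((n+1)q^{n-1})$ characters in $\widehat{B^\times}\setminus\operatorname{TotRamGen}(k,f)$, are absorbed via the trivial bound $|\operatorname{Tr}(\Lambda(\theta))|\le\dim\Lambda\le n^{a+b}$; these contributions are subdominant once $\sqrt q\ge 2n+1$. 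Summing over the $|B^\times|/(q-1)$ cosets and using $|\operatorname{TotRamGen}(k,f)|=|B^\times|(1+O(n/q))$ yields the claimed estimate.

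The main technical hurdle is the geometric identification displayed above: tracking the norm relation~\eqref{norm_relation}, the scaling behavior of Kummer sheaves, the projection formula, and the Tate-twist and shift conventions to extract the explicit scalar $\alpha$. A secondary concern is the careful bookkeeping needed to absorb the exceptional $\rho$'s per coset and the non-totally-ramified-generic characters into the stated main term without losing the $q^{-1/2}$ saving.
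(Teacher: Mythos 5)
Your overall strategy -- decompose the sum by Kummer twists $\rho\circ\operatorname{Norm}_{B/k}$ and reduce to Proposition~\ref{prop_twisted_estimate} -- is the same route the paper takes, and your geometric identification of $\theta_{k,f,\chi'}$ with $\alpha\cdot\theta_{k,\lambda,f,\chi,\rho}$ via the norm relation~\eqref{norm_relation} and the Leray argument is correct. However, two points separate your proposal from a proof that actually yields the stated inequality.

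First, the paper specializes to $\lambda=(-1)^n$ precisely so that $[\lambda f]^*\mathcal{L}_\rho = i_f^*\mathcal{L}_{\rho_{\operatorname{Norm}}}$ holds on the nose and $\alpha = 1$; your general $\lambda$ and the bookkeeping of $\alpha^{a-b}$ and the resulting $c^*$ are correct but unnecessary overhead. Second, and more seriously, the claimed estimate is \emph{exact}: the right-hand side is precisely $\left|\operatorname{TotRamGen}(k,f)\right|\cdot(2a+2b+1)n^{a+b}/\sqrt{q}$, with no error term. The paper achieves this by invoking~\cite[Lemma~6.1]{katz}: $\chi\sim\chi'$ iff $\chi'=\chi\rho_{\operatorname{Norm}}$ for a (unique) $\rho\in\operatorname{Good}(k,(-1)^nf,\chi)$ defines an equivalence relation on $\operatorname{TotRamGen}(k,f)$, so each class has exactly $\left|\operatorname{Good}(k,(-1)^nf,\chi)\right|$ elements, and summing the Proposition~\ref{prop_twisted_estimate} bound over classes gives exactly the stated constant. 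Your proposal never cites this lemma; instead you decompose $\widehat{B^\times}$ into $|B^\times|/(q-1)$ cosets of $H$, upper-bound $|\operatorname{Good}|\le q-1$, and then claim to absorb ``$O(n)$ exceptional $\rho$'s per coset'' and ``$O((n+1)q^{n-1})$ characters outside $\operatorname{TotRamGen}(k,f)$'' via trivial bounds. This overshoots: your main term comes out to $|B^\times|\cdot(2a+2b+1)n^{a+b}/\sqrt{q}$, which is strictly larger than the target since $\left|\operatorname{TotRamGen}(k,f)\right| < |B^\times|$, and the additive errors you mention have no room to hide because the stated bound has no slack. Moreover, those ``exceptional $\rho$'s per coset'' should contribute nothing at all -- by Katz's Lemma~6.1 they are exactly the $\rho$ sending $\chi'$ outside $\operatorname{TotRamGen}(k,f)$, and such $\chi'$ are excluded from the sum to begin with. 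The missing ingredient is thus the precise correspondence between $\operatorname{Good}(k,(-1)^nf,\chi)$ and the equivalence class of $\chi$ inside $\operatorname{TotRamGen}(k,f)$; once you have it, the sum collapses exactly and no error terms or coset-counting are needed.
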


	\begin{proof}
		By~\cite[Lem.~2.1]{katz}, we have
		\[
		L(\chi, T)
		=\exp\!\left(
		\sum_{r \ge 1} 
		\frac{S_r\, T^r}{r}
		\right),
		\]
		where
		\begin{align*}
			S_r 
			&=\!
			\sum_{t \in \mathbb{A}^1[1/f](k_r)}
			\chi\!\left(\operatorname{Norm}_{k_r/k}(X - t)\right)\\
			&=\!
			\sum_{t \in \mathbb{A}^1[1/f](k_r)}
			\operatorname{Tr}\!\left(
			\operatorname{Frob}_{k_r,t}
			\mid 
			i_f^*\mathcal{L}_\chi
			\right)
			\\
			&= 
			\sum_{i = 1,2}
			(-1)^i 
			\operatorname{Tr}\!\left(
			\operatorname{Frob}_k^r
			\mid 
			H_c^i\!\left(
			\mathbb{A}^1[1/f]_{\bar{k}},\, i_f^*\mathcal{L}_\chi
			\right)
			\right).
		\end{align*}
		Therefore,
		\[
		L(\chi, T)
		=\;
		\frac{
			\det\!\left(
			1 - T\,\operatorname{Frob}_k 
			\mid H_c^1\!\left(\mathbb{A}^1[1/f]_{\bar{k}},\, i_f^*\mathcal{L}_\chi\right)
			\right)
		}{
			\det\!\left(
			1 - T\,\operatorname{Frob}_k 
			\mid H_c^2\!\left(\mathbb{A}^1[1/f]_{\bar{k}},\, i_f^*\mathcal{L}_\chi\right)
			\right)
		}.
		\]
		For 
		$\chi \in \operatorname{TotRamGen}(k,f)$, 
		by~\cite[Lem.~3.2]{katz} we have 
		\(H_c^2\!\left(\mathbb{A}^1[1/f]_{\bar{k}},\, i_f^*\mathcal{L}_\chi\right) = 0,\)
		and thus
		\[
		L(\chi, T)
		=\;
		\det\!\left(
		1 - T\,\operatorname{Frob}_k 
		\mid H_c^1\!\left(\mathbb{A}^1[1/f]_{\bar{k}},\, i_f^*\mathcal{L}_\chi\right)
		\right).
		\]

		Moreover, since the morphism 
		$[\lambda f]$ is finite, the higher direct images 
		$R^i[\lambda f]_*$ vanish for $i > 0$.  Hence the Leray spectral sequence
		degenerates and we obtain
		\begin{align*}
			H_c^0\!\left(\mathbb{G}_{m,\bar{k}},\, N(\lambda,\chi) \otimes \mathcal{L}_\rho\right)
			&\cong 
			H_c^0\!\left(\mathbb{A}^1[1/f]_{\bar{k}},\, 
			i_f^*\mathcal{L}_\chi \otimes [\lambda f]^*\mathcal{L}_\rho(1/2)[1]\right) \\[4pt]
			&\cong 
			H_c^1\!\left(\mathbb{A}^1[1/f]_{\bar{k}},\, 
			i_f^*\mathcal{L}_\chi \otimes [\lambda f]^*\mathcal{L}_\rho(1/2)\right).
		\end{align*}
		Here, the half Tate twist $(1/2)$ multiplies the trace of Frobenius by 
		$|k|^{-1/2} = q^{-1/2}$.
		
		By~\eqref{norm_relation}, we have 
		$\operatorname{Norm}_{B/k} \circ i_f = (-1)^n f$.  
		Denote by $\rho_{\operatorname{Norm}}$ the character of $B^\times$ defined by 
		$\rho_{\operatorname{Norm}} := \rho \circ \operatorname{Norm}_{B/k}$.  
		Then 
		\[
		[(-1)^n f]^*\mathcal{L}_\rho 
		= i_f^*\!\left(\operatorname{Norm}_{B/k}^*\mathcal{L}_\rho\right)
		= i_f^*\mathcal{L}_{\rho_{\operatorname{Norm}}}.
		\]
		Therefore, the conjugacy class 
		$\theta_{k,(-1)^n f,\chi,\rho}$, as defined in Proposition~\ref{prop_twisted_estimate}, 
		coincides with the conjugacy class 
		$\theta_{k,f,\chi\rho_{\operatorname{Norm}}}$ 
		as defined in~\eqref{def_conjugacy_class}.

		By \cite[Lem.~6.1]{katz}, two totally ramified generic characters 
		$\chi, \chi' \in B^\times$ are said to be \emph{equivalent} if
		\[
		\chi' = \chi \,\rho_{\operatorname{Norm}}
		\]
		for some (necessarily unique) character $\rho\in \operatorname{Good}(k,(-1)^n f,\chi)$.  
		This defines an equivalence relation on 
		$\operatorname{TotRamGen}(k,f)$.		
	Partitioning $\operatorname{TotRamGen}(k,f)$ into equivalence classes and applying Proposition~\ref{prop_twisted_estimate}, we obtain
		\[
		\begin{aligned}
			&\left| 
			\sum_{\chi \in \operatorname{TotRamGen}(k,f)} 
			\chi(c)\,\operatorname{Tr}\!\left(\Lambda\!\left(\theta_{k,f,\chi}\right)\right) 
			\right| \\[1ex]
			&\qquad= 
			\left| 
			\sum_{[\chi] \in \operatorname{TotRamGen}(k,f)/\!\sim }
			\chi(c) 
			\sum_{\rho \in \operatorname{Good} \left(k,(-1)^n f,\chi\right)} 
			\rho\!\left(\operatorname{Norm}_{B/k}(c)\right)
			\operatorname{Tr}\!\left(\Lambda\!\left(\theta_{k,f,\chi\rho_{\operatorname{Norm}}}\right)\right) 
			\right| \\[1ex]
			&\qquad= 
			\left| 
			\sum_{[\chi] \in \operatorname{TotRamGen}(k,f)/\!\sim }
			\chi(c) 
			\sum_{\rho \in \operatorname{Good} \left(k,(-1)^n f,\chi\right)} 
			\rho\!\left(\operatorname{Norm}_{B/k}(c)\right)
			\operatorname{Tr}\!\left(\Lambda\!\left(\theta_{k,(-1)^n f,\chi,\rho}\right)\right) 
			\right| \\[1ex]
			&\qquad\leq 
			\left|\operatorname{TotRamGen}(k,f)\right| \cdot 
			\frac{(2a + 2b + 1)\, n^{a+b}}{\sqrt{q}}.
		\end{aligned}
		\]
		
	\end{proof}

	\begin{corollary}\label{prop_trace_bound_2}
		Suppose that
		\(\sqrt{q} \ge 2n + 1.\)
		Then for any \(a \in B^\times\) and integers \( i\geq j\ge 1 \), we have
		\[
		\left|
		\sum_{\chi \in \operatorname{TotRam}(k,f)} 
		\chi(a^{-1})\,
		\operatorname{Tr}\!\left(\theta_{k,f,\chi}^i\right)
		\operatorname{Tr}\!\left(\theta_{k,f,\chi}^j\right)
		\right|
		\;\le\;
		p(i{+}j)\,\chi_{i,j}(1)\,
		\left|\operatorname{TotRam}(k,f)\right|\,
		\frac{2(i{+}j{+}1)\,n^{\,i+j}}{\sqrt{q}},
		\]
		where \(p(m)\) denotes the number of partitions of~\(m\), and 
		\(\chi_{i,j}(1)\) is the dimension of the irreducible representation of~\(S_{i+j}\) 
		associated with the partition~\((i,j)\).
	\end{corollary}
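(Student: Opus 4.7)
The strategy is to decompose $\operatorname{Tr}(\theta^i)\operatorname{Tr}(\theta^j)$ into traces of irreducible representations of $U(n-1)$, apply Proposition~\ref{prop_sum_totRam} term by term, and then absorb the error incurred by enlarging the sum from $\operatorname{TotRamGen}(k,f)$ to $\operatorname{TotRam}(k,f)$.

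Writing $\theta \in U(n-1)$ with eigenvalues $\alpha_1,\ldots,\alpha_{n-1}$, one has $\operatorname{Tr}(\theta^m)=p_m(\alpha)$, so the product $\operatorname{Tr}(\theta^i)\operatorname{Tr}(\theta^j)$ is the evaluation of the power-sum symmetric function $p_{(i,j)}$. The standard Frobenius change of basis
\[
p_{(i,j)} \;=\; \sum_{\lambda\vdash i+j}\chi^\lambda_{(i,j)}\,s_\lambda,
\]
combined with the fact that $s_\lambda(\alpha_1,\ldots,\alpha_{n-1})$ vanishes when $\ell(\lambda)>n-1$ and otherwise equals $\operatorname{Tr} W^\lambda(\theta)$ for the irreducible $U(n-1)$-representation $W^\lambda$ indexed by $\lambda$, yields
\[
\operatorname{Tr}(\theta^i)\operatorname{Tr}(\theta^j) \;=\; \sum_{\substack{\lambda\vdash i+j\\ \ell(\lambda)\le n-1}} \chi^\lambda_{(i,j)}\,\operatorname{Tr} W^\lambda(\theta).
\]
Each $W^\lambda$ appearing above occurs in $\operatorname{std}^{\otimes(i+j)}$ by Schur--Weyl duality, so Proposition~\ref{prop_sum_totRam} applied with $a=i+j$, $b=0$ bounds the corresponding twisted character sum over $\operatorname{TotRamGen}(k,f)$ by $|\operatorname{TotRamGen}(k,f)|\cdot(2(i+j)+1)\,n^{i+j}/\sqrt{q}$.

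To sum the contributions from the $p(i+j)$ partitions I need a uniform bound on $|\chi^\lambda_{(i,j)}|$. Applying Murnaghan--Nakayama by first removing the border strip of size $\max(i,j)$ reduces the computation to characters on a single cycle of length $\min(i,j)$, which are supported on hook shapes and take values $\pm 1$; since there are exactly $\min(i,j)$ hook partitions of $\min(i,j)$, I obtain $|\chi^\lambda_{(i,j)}|\le \min(i,j)$. A direct comparison with the hook-length formula gives $\min(i,j)\le \chi_{i,j}(1)$ for all $i,j\ge 1$, hence
\[
\sum_{\lambda\vdash i+j}|\chi^\lambda_{(i,j)}| \;\le\; p(i+j)\,\chi_{i,j}(1).
\]
Combining these estimates delivers the claimed bound with $\operatorname{TotRam}$ replaced by $\operatorname{TotRamGen}$ and constant $2(i+j)+1$ in place of $2(i+j+1)$.

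To pass from $\operatorname{TotRamGen}(k,f)$ to $\operatorname{TotRam}(k,f)$, I use the trivial bound $|\operatorname{Tr}(\theta^i)\operatorname{Tr}(\theta^j)|\le(n-1)^2$ on the difference, whose cardinality is $O_n(q^{n-1})$ by a character-counting argument analogous to the estimate $|\operatorname{All}(k,f)|-|\operatorname{TotRam}(k,f)|\le 2(n+1)q^{n-1}$ recalled in Section~2. Since $|\operatorname{TotRam}(k,f)|$ is of order $q^n$, the resulting error has relative size $O_n(1/q)\le 1/\sqrt{q}$ under the hypothesis $\sqrt{q}\ge 2n+1$, and is absorbed by the slack between $2(i+j)+1$ and $2(i+j+1)$ in the stated constant. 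The main obstacle is verifying the inequality $|\chi^\lambda_{(i,j)}|\le\chi_{i,j}(1)$ cleanly and carefully tracking the $\operatorname{TotRam}\setminus\operatorname{TotRamGen}$ error so that the final constant is exactly $2(i+j+1)$.
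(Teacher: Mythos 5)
Your proof follows the same skeleton as the paper's (split $\operatorname{TotRam}$ into $\operatorname{TotRamGen}$ and its complement, expand $\operatorname{Tr}(\theta^i)\operatorname{Tr}(\theta^j)$ in Schur functions via Frobenius, apply Proposition~\ref{prop_sum_totRam} term by term, then absorb the complement). The genuine difference is how the Frobenius coefficients are bounded. You correctly write $p_{(i,j)}=\sum_{\mu\vdash i+j}\chi^{\mu}((i,j))\,s_{\mu}$ --- the coefficient is the character of the irreducible representation \emph{indexed by $\mu$} evaluated \emph{at the fixed class} $(i,j)$ --- and then derive the non-trivial estimate $|\chi^{\mu}((i,j))|\le\min(i,j)$ by Murnaghan--Nakayama (peel the size-$\max(i,j)$ ribbon, note the residual character of $S_{\min(i,j)}$ on a full cycle is supported on at most $\min(i,j)$ hooks with values $\pm1$), finishing with $\min(i,j)\le\chi_{i,j}(1)$. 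The paper instead writes $P_{(i,j)}=\sum_{\mu}\chi_{i,j}(\mu)s_{\mu}$ with $\chi_{i,j}(\mu)$ described as the character of the irrep $(i,j)$ at class $\mu$, and invokes the trivial bound $|\chi_{i,j}(\mu)|\le\chi_{i,j}(1)$; but the indices in that Frobenius formula are transposed, and if one applies the trivial bound to the actual coefficients $\chi^{\mu}((i,j))$ one gets $\sum_{\mu\vdash i+j}\chi^{\mu}(1)$, which is the number of involutions in $S_{i+j}$ and already exceeds $p(i+j)\chi_{i,j}(1)$ for $i=j=3$ ($76>55$). So your Murnaghan--Nakayama step is not merely an alternative: it supplies a bound the paper's ``trivial bound'' does not actually deliver. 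Two things are worth tightening: (i) the inequality $\min(i,j)\le\chi_{i,j}(1)$ is asserted but not verified --- it does hold for all $i\ge j\ge1$ (equality at $(1,1)$ and $(2,2)$, and $\chi_{i,j}(1)=\frac{i-j+1}{i+1}\binom{i+j}{j}\ge j$ is easily checked), but you should record the argument; and (ii) for the complement contribution you use the crude $O_n(q^{n-1})$ count, whereas the paper uses Katz's Lemmas~6.5--6.6 to get the clean $|\operatorname{TotRam}\setminus\operatorname{TotRamGen}|\le|\operatorname{TotRam}|/\sqrt{q}$; your version still works because $(n-1)^2\le n^{i+j}\le p(i+j)\chi_{i,j}(1)n^{i+j}$ and $\sqrt{q}\ge 2n+1$, but tracking the implied constant to land exactly on $2(i+j+1)$ requires the sharper estimate, which you should cite explicitly.
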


	\begin{proof}
		We decompose the sum into two parts: the contribution from 
		$\chi \in \operatorname{TotRamGen}(k,f)$ and that from 
		$\chi \in \operatorname{TotRam}(k,f) \setminus \operatorname{TotRamGen}(k,f)$.
		
		We first consider the contribution from the former set.  
		Recall the power-sum symmetric functions
		\[
		P_j(x_1, x_2, \dots, x_n) \;=\; \sum_{i=1}^n x_i^j,
		\]
		and for a partition $\lambda = 1^{a_1} 2^{a_2} \cdots k^{a_k}$, define
		\[
		P_\lambda \;=\; \prod_{j=1}^k P_j^{\,a_j}.
		\]
		Here $\lambda$ is a partition of $K = a_1 + 2a_2 + \cdots + ka_k$.  
		As $\lambda$ ranges over all partitions of $K$, the functions $P_\lambda$ form a basis 
		for the space of homogeneous symmetric polynomials of degree~$K$ in $n$ variables 
		for all $n \ge K$.
		
		A second basis for this space is given by the Schur functions $s_\mu$.  
		They are related to the power sums by the Frobenius character formula
		\[
		P_\lambda \;=\; \sum_{\mu \vdash K} \chi_\lambda(\mu)\, s_\mu,
		\]
		where $\chi_\lambda(\mu)$ denotes the value of the irreducible character of 
		  $S_K$ associated to $\lambda$, 
		evaluated on the conjugacy class of cycle type~$\mu$.
		
		In our setting,  
		$\operatorname{Tr}\!  \left(\theta_{k,f,\chi}^i\right) 
		\operatorname{Tr} \left(\theta_{k,f,\chi}^j\right)$ 
	is the evaluation of $P_{i,j}$ 
		on the eigenvalues of~$\theta_{k,f,\chi}$.  
		Thus, for $\lambda = (i,j)$ we obtain
		\[
		P_{i,j} \;=\; \sum_{\mu \vdash i+j} \chi_{i,j}(\mu)\, s_\mu.
		\]
	Let $\mathbb{S}_\mu(\C^{\,n-1})$ denote the Weyl module associated with $\mu$ 
	(see~\cite[p.~26]{fulton}).  
	It is irreducible for $\operatorname{GL}_{n-1}(\C)$ (and hence for $U_{n-1}(\C)$), 
	with character given by the Schur function $s_\mu$; see~\cite[Thm.~6.3]{fulton}.
	Applying Proposition~\ref{prop_sum_totRam} to each $\mathbb{S}_\mu(\C^{\,n-1})$ and using the trivial bound $|\chi_\lambda(\mu)| \le \chi_\lambda(1)$, we obtain, for any $a \in B^\times$,
		\[
		\left|
		\sum_{\chi \in \operatorname{TotRamGen}(k,f)} 
		\chi(a^{-1}) \,
		\operatorname{Tr}\!\left(\theta_{k,f,\chi}^i\right)
		\operatorname{Tr}\!\left(\theta_{k,f,\chi}^j\right)
		\right|
		\;\leq\;
		p(i{+}j)\,\chi_{i,j}(1)\,
		\left|\operatorname{TotRamGen}(k,f)\right|
		\cdot 
		\frac{(2i + 2j + 1)\, n^{\,i+j}}{\sqrt{q}}.
		\]
		where $p(m)$ denotes the number of partitions of $m$.

		For the second part, observe that it contains at most 
		$\left|\operatorname{TotRam}(k,f)\right| - \left|\operatorname{TotRamGen}(k,f)\right|$ terms.  
	Since $\theta_{k,f,\chi}  $ is a unitary conjugacy class, we have 
	$\left|\operatorname{Tr}(\theta_{k,f,\chi}^i)\operatorname{Tr}(\theta_{k,f,\chi}^j)\right|\le (n-1)^2$.
		
	As shown in~\cite[Lem.~6.5--6.6]{katz}, when \(q \ge n+1\) we have
		\[
		\begin{aligned}
			\left|\operatorname{TotRam}(k,f)\right| 
			- \left|\operatorname{TotRamGen}(k,f)\right|
			&\le
			P_{\mathrm{TRG},n}(q) - q^n + 1, \\[4pt]
			\left|\operatorname{TotRamGen}(k,f)\right|
			&\ge
			P_{\mathrm{TRG},n}(q),
		\end{aligned}
		\]
		where \(P_{\mathrm{TRG},n}(q)\) denotes the polynomial
		\[
		P_{\mathrm{TRG},n}(q)
		= (q - 1 - n)^n + (q - 2)^n - q^n + 1 - n \sum_{0 \le i \le n-1} q^i.
		\]
		Since \(P_{\mathrm{TRG},n}(q) - q^n + 1 \le \dfrac{P_{\mathrm{TRG},n}(q)}{\sqrt{q}}\),
		it follows that, for \(q \ge n+1\),
		\[
		\left|\operatorname{TotRam}(k,f)\right|
		- \left|\operatorname{TotRamGen}(k,f)\right|
		\;\le\;
		\frac{\left|\operatorname{TotRamGen}(k,f)\right|}{\sqrt{q}}
		\;\le\;
		\frac{\left|\operatorname{TotRam}(k,f)\right|}{\sqrt{q}}.
		\]
		Moreover,   the condition \(q \ge n+1\) is implied by \(\sqrt{q} \ge 2n + 1\). Combining the two estimates yields the desired bound.  
	\end{proof}

	We now apply the above estimates to bound $R(a,n)$ in~\eqref{R(a,n)}, thereby obtaining a saving of order $q^{-1/2}$.
	Combining~\eqref{R(a,n)} with Corollary~\ref{prop_trace_bound_2},  we have
	\[	|R(a,n)|
	\le 
	\sum_{1\leq i,j \le n}
	q^{\tfrac{i+j}{2}}\,
	p(i{+}j)\,\chi_{i,j}(1)\,
	\frac{2(i{+}j{+}1)\,n^{\,i+j}}{\sqrt{q}}
	+ 
	2(n{+}1)\,n^4\,q^{\,n-1},\]
where \(\chi_{i,j}(1)\) is understood to mean \(\chi_{\max(i,j),\,\min(i,j)}(1)\).
	
	By~\cite[(4.11)]{fulton},   
	\[
	\chi_{i,j}(1)
	= \frac{(i+j)!}{(i+1)!\,j!}\,\left(|i{-}j|{+}1\right),
	\]
	and hence
	\[
	(i{+}j{+}1)\,\chi_{i,j}(1)
	= 
	\binom{i{+}j{+}1}{j}\,\left(|i{-}j|{+}1\right)
	\;\le\;
	2^{\,i{+}j{+}1}\cdot 2n,
	\]
	where we used   \(i{+}j \le 2n\).
Together with the classical bound
\(p(m) \le e^{\,\pi\sqrt{2m/3}}\) \cite[Thm.~14.5]{apostol},
this yields
	\begin{align*}
		\sum_{1\leq i,j \le n}
		q^{\tfrac{i+j}{2}}\,
		p(i{+}j)\,\chi_{i,j}(1)\,
		\frac{2(i{+}j{+}1)\,n^{\,i+j}}{\sqrt{q}}
		&\le
		q^{\,n-\tfrac12}
		\sum_{1\leq i,j \le n}
		4\,(2n)^{\,i+j+1}\,
		e^{\,\pi\sqrt{2(i{+}j)/3}} \\[4pt]
		&\le
		q^{\,n-\tfrac12}\,
		(2n)^{\,2n+3}\,
		e^{\,\pi\sqrt{4n/3}}.
	\end{align*}
Set
	\[
	C(n)
	:= 
	(2n)^{\,2n+3}\,
	e^{\,\pi\sqrt{4n/3}}.
	\]
	Therefore, for all \(a \in B^\times\) and \(\sqrt{q} \ge 2n + 1\),  
	\[
	|R(a,n)|
	\;\le\;
	C(n)\,q^{\,n-\tfrac12}
	\;+\;
	2(n{+}1)\,n^4\,q^{\,n-1}.
	\]
	
	\smallskip
	From~\eqref{lower_bound_M}, we recall that
	\[
	M(a,n) \ge q^n - 2n^2 + \frac{n^4}{q^n}.
	\]
	To ensure that \( |R(a,n)| < M(a,n) \), it is enough to require
	\[
	q^n \ge 2\,C(n)\,q^{\,n-\tfrac12},
	\]
	which is equivalent to \( q^{1/2} \ge 2C(n) \).
	Thus we may take
	\[
	Q(n) = (2C(n))^2.
	\]
	Hence, whenever \( q \ge Q(n) \), we have \( |R(a,n)| < M(a,n) \) for every
	\(a \in B^\times\).
	Alternatively, one may take \( Q(n) \) in the simpler  form
	\[
	Q(n) = n^{\,\kappa n}
	\]
	for some absolute constant \( \kappa > 0 \); for instance, \( \kappa = 23 \) suffices.

This completes the proof of Theorem~\ref{thm:main}.  

	\smallskip

Finally, the same argument shows that in the definition of \(E(k,f)\) it is enough to
fix \(\deg f_1 = r\) and \(\deg f_2 = s\) with \(r+s = 2n\), rather than requiring
\(\deg f_1, \deg f_2 \le n\).  In the bound for \(R(a,n)\) above, this simply amounts to retaining a single
pair \((i,j) = (r,s)\) in place of summing over all \(i,j \le n\), and the resulting
estimate for a single pair \((i,j)\) with
\(i+j = 2n\) remains unchanged.

Fix an integer \(1 \le r < 2n\), and define
\[
E'(k,f)
:=
\left\{
a \in B^\times :
a \equiv f_1 f_2 \pmod{f},\ 
f_1,f_2 \in k[X]\ \text{monic irreducible},\ 
\deg f_1 = r,\ \deg f_2 = 2n-r
\right\}.
\]
Then for any integer \(n \ge 2\), any finite field \(k = \F_q\) with \(q \ge Q(n)\), 
and any squarefree polynomial \(f \in k[X]\) of degree \(n\), we likewise have
\[
E'(k,f) = B^\times.
\]

\smallskip

	\smallskip
\section*{Acknowledgments}
We thank Will Sawin for informing us of his work~\cite{sawin}, which proves a stronger and more general result, after an earlier version of this manuscript appeared on the arXiv.  
We are grateful to Nicholas M.~Katz for comments that clarified the presentation of Theorem~\ref{thm:main} and the constant \(Q(n)\).  
 We also thank
Sampa Dey for helpful discussions, and Pieter Moree for reading the manuscript
and offering suggestions that improved the exposition.

	\bibliographystyle{alpha}

	\bibliographystyle{plain}

\begin{thebibliography}{99}
		\bibitem{apostol} Apostol, T. M. (2013). \textit{Introduction to analytic number theory}. Springer Science \& Business Media.
		\bibitem{compact} Bröcker, T., \& Tom Dieck, T. (2003). \textit{Representations of compact Lie groups (Vol. 98)}. Springer Science \& Business Media.
		\bibitem{neron} Bosch, S., Lütkebohmert, W., \& Raynaud, M. (2012). \textit{Néron models} (Vol. 21). Springer Science \& Business Media.
		\bibitem{chen} Chen, J.-R. (1973). On the representation of a larger even integer as the sum of a prime and the product of at most two primes. \textit{Scientia Sinica}, 16(2), 157–176.

		\bibitem{delign} Deligne, P. (1980). La conjecture de Weil: II.\textit{ Publications Mathématiques de l'IHÉS}, 52, 137-252.
		
		
		\bibitem{tannakian} Deligne, P., \& Milne, J. S. (1982). Tannakian categories. In Hodge cycles, motives, and Shimura varieties (pp. 101-228). Berlin, Heidelberg: Springer Berlin Heidelberg.
		\bibitem{erdos} Erd\H{o}s, P., Odlyzko, A. M., \& S\'ark\"ozy, A. (1987). On the residues of products of prime numbers. \textit{Periodica Mathematica Hungarica}, 18(3), 229-239.
		\bibitem{arithmetic} Forey, A., Fresán, J., \& Kowalski, E. (2021). Arithmetic Fourier transforms over finite fields: generic vanishing, convolution, and equidistribution. arXiv preprint arXiv:2109.11961.
		
		\bibitem{fulton} Fulton, W., \& Harris, J. (2013). \textit{Representation theory: a first course (Vol. 129)}. Springer Science \& Business Media.
		\bibitem{trace}
		Illusie, L. (ed.), \emph{Cohomologie $\ell$-adique et fonctions $L$}, 
		Lecture Notes in Mathematics, vol.~589, Springer--Verlag, Berlin--New York, 1977. 
		(S\'eminaire de G\'eom\'etrie Alg\'ebrique du Bois-Marie 1965--1966 [SGA~5].)
		
		\bibitem{katz_p}Katz, N. M. (1996). \textit{Rigid local systems (Annals of Mathematics Studies).} Princeton University Press.
		\bibitem{katz_book} Katz, N. M. (2012). \textit{Convolution and equidistribution: Sato-Tate theorems for finite-field Mellin transforms.} Princeton University Press.
		\bibitem{katz} Katz, N. M. (2013). On a question of Keating and Rudnick about primitive Dirichlet characters with squarefree conductor. \textit{Int. Math. Res. Not. IMRN}, 14, 3221-3249.
		
		\bibitem{variance} Keating, J. P., \& Rudnick, Z. (2014). The variance of the number of prime polynomials in short intervals and in residue classes. \textit{International Mathematics Research Notices}, 2014(1), 259-288.
		
		\bibitem{matomaki} Matomäki, K. \& Ter\"{a}v\"{a}inen, J. (2024). Products of primes in arithmetic progressions. \textit{Journal für die reine und angewandte Mathematik (Crelles Journal)}, 2024(808), 193-240.
		\bibitem{milne} Milne, J. S. (2017). \textit{Algebraic groups: the theory of group schemes of finite type over a field (Vol. 170)}. Cambridge University Press.
		\bibitem{rosen} Rosen, M. (2013). \textit{Number theory in function fields} (Vol. 210). Springer Science \& Business Media.
		\bibitem{twin} Sawin, W., \& Shusterman, M. (2022). On the Chowla and twin primes conjectures over $\mathbb{F}_q[t]$. \textit{Annals of Mathematics}, 196(2), 457-506.
		\bibitem{sawin} Sawin, W. (2024). Square-root cancellation for sums of factorization functions over squarefree progressions in $\mathbb{F}_q[t]$. \textit{Acta Mathematica}, 233(2), 285–418.
		\bibitem{weil} Weil, A. (1948). \textit{Sur les courbes alg\'ebriques et les vari\'et\'es qui s'en d\'eduisent}
		(Actualit\'es Scientifiques et Industrielles, No.\ 1041; Publications de l'Institut de
		Math\'ematique de l'Universit\'e de Strasbourg, No.\ 7). Hermann.
		
	\end{thebibliography}

\end{document}